\documentclass[12pt,reqno]{amsart}

\usepackage[a4paper,margin=30mm]{geometry}
\usepackage{amsmath,amssymb,amsthm,mathtools}
\numberwithin{equation}{section}
\usepackage{bm}
\usepackage{enumitem}
\usepackage[hidelinks]{hyperref}
\hypersetup{
 pdftitle={Global finite-energy weak solutions with vacuum for a periodic 1D Navier--Stokes--Maxwell system},
 pdfauthor={Cheng Yu},
 pdfsubject={Global finite-energy weak solutions with vacuum for a periodic compressible Navier--Stokes--Maxwell system},
 pdfkeywords={Maxwell--Ohm equations, bilinear-control continuity, weak-to-strong continuity, compressible Navier--Stokes--Maxwell system, finite-energy weak solutions, vacuum}
}

\usepackage{aliascnt}
\newtheorem{theorem}{Theorem}[section]
\newaliascnt{proposition}{theorem}
\newtheorem{proposition}[proposition]{Proposition}
\aliascntresetthe{proposition}
\newaliascnt{lemma}{theorem}
\newtheorem{lemma}[lemma]{Lemma}
\aliascntresetthe{lemma}
\newaliascnt{corollary}{theorem}

\aliascntresetthe{corollary}
\theoremstyle{definition}
\newaliascnt{definition}{theorem}
\newtheorem{definition}[definition]{Definition}
\aliascntresetthe{definition}
\newaliascnt{remark}{theorem}
\newtheorem{remark}[remark]{Remark}
\aliascntresetthe{remark}
\usepackage[nameinlink,capitalize,noabbrev]{cleveref}
\crefname{proposition}{Proposition}{Propositions}
\Crefname{proposition}{Proposition}{Propositions}
\crefname{lemma}{Lemma}{Lemmas}
\Crefname{lemma}{Lemma}{Lemmas}
\crefname{corollary}{Corollary}{Corollaries}
\Crefname{corollary}{Corollary}{Corollaries}
\crefname{definition}{Definition}{Definitions}
\Crefname{definition}{Definition}{Definitions}
\crefname{remark}{Remark}{Remarks}
\Crefname{remark}{Remark}{Remarks}

\newcommand{\T}{\mathbb T}
\newcommand{\R}{\mathbb R}
\newcommand{\D}{\mathcal D}

\newcommand{\weak}{\rightharpoonup}
\newcommand{\weakstar}{\stackrel{*}{\rightharpoonup}}
\newcommand{\eps}{\varepsilon}

\newcommand{\meanx}[1]{\langle #1\rangle_x}

\title[Finite-energy solutions with vacuum]{Global finite-energy weak solutions to the Navier--Stokes--Maxwell in 1D}
\author{Cheng Yu}
\address{Department of Mathematics, University of Florida, Gainesville, FL 32611, USA}
\email{chengyu@ufl.edu}
\date{}

\begin{document}

\begin{abstract}
We construct global finite-energy weak solutions for a 1D barotropic Navier--Stokes--Maxwell system with displacement current and the algebraic Ohm law. The result holds for every \(\gamma>1\) and arbitrary finite-energy initial data, allowing vacuum and requiring only \(L^2\) initial electromagnetic fields.

A key ingredient is a weak-to-strong compactness principle for the Maxwell--Ohm subsystem: weak convergence of the velocity coefficients in \(L^2_tH^1_x\), together with strong convergence of the initial fields, yields strong electromagnetic trajectories in \(C_tL^2_x\). This permits identification of the weak current and the distributional Lorentz force without strong convergence of the velocity. The electromagnetic closure is combined with the classical artificial-viscosity and artificial-pressure construction for compressible flow. A density-primitive effective-flux argument identifies the physical pressure and eliminates the physical and artificial pressure-concentration defects.
\end{abstract}

\subjclass[2020]{
Primary 76N06, 35Q61;
Secondary 35Q30, 35D30, 76W05}

\keywords{Maxwell--Ohm equations, 
weak-to-strong continuity, compressible Navier--Stokes--Maxwell system,
finite-energy weak solutions, vacuum}
\maketitle

\section{Introduction}

The Navier--Stokes--Maxwell system couples a viscous conducting fluid to an
electromagnetic field propagated by Maxwell's equations.  Keeping the
displacement current makes the electromagnetic component genuinely
hyperbolic.  This is analytically different from replacing Maxwell's
equations by a quasistatic or magnetohydrodynamic approximation: an energy
bound controls the electric and magnetic fields in $L^2$, but does not by
itself make either field compact.  Consequently, the Lorentz force contains
precisely the kind of weak--weak product that is invisible to the usual energy
method.

The same obstruction is central in the higher-dimensional incompressible
Navier--Stokes--Maxwell theory. Masmoudi~\cite{Masmoudi2010} proved global
well-posedness in two dimensions for
$u_0\in L^2(\mathbb R^2)$ and
$E_0,B_0\in H^s(\mathbb R^2)$, $0<s<1$, allowing an arbitrarily small
amount of electromagnetic regularity beyond the energy space.
Ibrahim and Keraani~\cite{IbrahimKeraani2011} constructed global small
solutions, while Germain, Ibrahim and
Masmoudi~\cite{GermainIbrahimMasmoudi2014} obtained local solutions for
large data and global solutions for small data in nearly critical spaces.
Further global results include the large-energy theory of Ars\'enio and
Gallagher~\cite{ArsenioGallagher2020} and the axisymmetric
three-dimensional theory of Ars\'enio, Hassainia and
Houamed~\cite{ArsenioHassainiaHouamed2024}.
Ars\'enio, Houamed and
Said--Houari~\cite{ArsenioHouamedSaidHouari2025} extended the
two-dimensional theory to inhomogeneous fluids under suitable assumptions
on the density, velocity and electromagnetic fields.
These developments do not resolve the compactness obstruction at the
natural energy level: the energy estimate alone controls the electromagnetic
fields in $L^\infty_tL^2_x$ but does not provide the compactness needed to
pass to the limit in the Lorentz force.
This obstruction is also highlighted in the recent discussion of
Houamed, Ibrahim and
Said--Houari~\cite{HouamedIbrahimSaidHouari2026}.
In particular, global weak solutions for arbitrary data in the natural
energy space remain open in both two and three dimensions, a problem
already emphasized by Masmoudi~\cite{Masmoudi2010}; see also the monograph
of Ars\'enio and Saint-Raymond~\cite{ArsenioSaintRaymond2019}.

The 1D compressible system considered here is not a dimensional reduction
of that incompressible problem.  Rather, it is a model problem that retains
the displacement current, the algebraic Ohm law, and the same weak--weak
electromagnetic coupling.  In this setting, the corresponding natural-energy
compactness obstruction can be overcome: the Maxwell--Ohm subsystem enjoys
the weak-to-strong compactness property of Theorem~1.1, which provides the
electromagnetic closure needed for the global finite-energy weak existence
result of Theorem~1.4, including vacuum.
The electromagnetic closure relies on strong compactness at fixed spatial frequency and a uniform removal of the spatial cutoff using the 1D regularity of the velocity coefficients.

Let $\T=\R/\mathbb Z$, normalized by $|\T|=1$, and let
$a,\mu,\eps,\sigma>0$.  We consider
\begin{align}
 \rho_t+(\rho u)_x&=0,                                      \label{eq:mass}\\
 (\rho u)_t+(\rho u^2+a\rho^\gamma)_x-\mu u_{xx}&=-jb,       \label{eq:mom}\\
 \eps E_t-b_x&=-j,                                          \label{eq:ampere}\\
 b_t-E_x&=0,                                                 \label{eq:faraday}\\
 j&=\sigma(E+ub),                                            \label{eq:ohm}
\end{align}
supplemented by the periodic initial data
\[
 \rho(0,\cdot)=\rho_0,\qquad (\rho u)(0,\cdot)=m_0,\qquad
 E(0,\cdot)=E_0,\qquad b(0,\cdot)=b_0.
\]
Here $\gamma>1$; $\rho$ is the fluid mass density, $u$ is the
longitudinal fluid velocity, $E$ and $b$ are the scalar transverse electric
and magnetic field components, respectively, and $j$ is the corresponding
transverse electric current density.  The barotropic pressure is
$p(\rho)=a\rho^\gamma$; thus $a$ and $\gamma$ are the pressure parameters,
$\mu$ is the viscosity coefficient, $\eps$ is the electric permittivity
(the coefficient of the displacement current), and $\sigma$ is the electrical
conductivity.

This is the scalar reduction obtained from $u=(u,0,0)$, $E=(0,0,E)$ and
$B=(0,b,0)$: the current is $j=(0,0,j)$ and
$j\times B=(-jb,0,0)$.  In particular, the sign in \eqref{eq:mom} is fixed by
the three-dimensional Lorentz force.  The Gauss-law constraints are satisfied
identically in this transverse reduction.  The unknown $b$ is the transverse
magnetic component, not a magnetic potential.

The formal total energy is
\begin{equation}
 \mathcal E(t)=\int_\T\left(
 \frac12\rho u^2+\frac{a}{\gamma-1}\rho^\gamma
 +\frac\eps2E^2+\frac12b^2\right)\,dx.                       \label{eq:energy}
\end{equation}
For smooth solutions,
\begin{equation}
 \frac{d}{dt}\mathcal E(t)
 +\mu\int_\T |u_x|^2\,dx
 +\frac1\sigma\int_\T |j|^2\,dx=0.                         \label{eq:energy-id}
\end{equation}
Indeed, the fluid and electromagnetic coupling terms add up to
\(
 -j(ub+E)=-|j|^2/\sigma.
\)

The natural estimates following from \eqref{eq:energy-id} are
\begin{equation}
 \rho\in L^\infty_tL^\gamma_x,\quad
 \sqrt\rho\,u\in L^\infty_tL^2_x,\quad
 u_x,j\in L^2_{t,x},\quad
 E,b\in L^\infty_tL^2_x.                                  \label{eq:natural-bounds}
\end{equation}
The positive total mass controls the missing spatial mean of $u$, so  $u\in L^2_tH^1_x\hookrightarrow L^2_tL^\infty_x$ in 1D.  Nevertheless,
these bounds give only weak convergence of $u$, $b$ and $j$ along an
approximation.  Neither $ub$ in Ohm's law nor $jb$ in the momentum equation is
therefore identified by a direct compactness argument.

\subsection{Maxwell--Ohm closure via weak-control continuity.}

\begin{theorem}[Weak-to-strong Maxwell--Ohm compactness]\label{thm:maxwell}
Let $v_n\weak v$ in $L^2(0,T;H^1(\T))$, and let
$(E_{0,n},b_{0,n})\to(E_0,b_0)$ in $L^2(\T)^2$.  For each $n$, let
$(E_n,b_n)$ be the energy solution of
\begin{equation}
 \eps E_{n,t}-b_{n,x}=-\sigma(E_n+v_nb_n),\qquad
 b_{n,t}-E_{n,x}=0.                                        \label{eq:intro-maxwell}
\end{equation}
with initial datum $(E_{0,n},b_{0,n})$.  Let $(E,b)$ denote the unique energy
solution with coefficient $v$ and initial datum $(E_0,b_0)$.  Then the entire
sequence satisfies
\begin{equation}
 (E_n,b_n)\to(E,b)\quad\text{in }C([0,T];L^2(\T)^2).        \label{eq:maxwell-strong}
\end{equation}
Moreover, with $j_n=\sigma(E_n+v_nb_n)$,
\begin{equation}
 j_n\weak j=\sigma(E+vb)\quad\text{in }L^2_{t,x},\qquad
 j_nb_n\to jb\quad\text{in }\D'((0,T)\times\T).          \label{eq:lorentz-stable}
\end{equation}
Thus the Maxwell--Ohm solution map is weak-to-strong continuous with respect
to its velocity coefficient.
\end{theorem}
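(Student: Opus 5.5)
We will prove the theorem through a uniform energy bound, strong convergence at each fixed Fourier mode, and a uniform tail estimate; the tail estimate is the main obstacle.

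\textbf{Uniform bounds.} For the linear system \eqref{eq:intro-maxwell}, multiplying the first equation by $E_n$ and the second by $b_n$ gives
\[
\frac{d}{dt}\int_\T\Big(\tfrac\eps2E_n^2+\tfrac12b_n^2\Big)dx+\sigma\int_\T E_n^2\,dx=-\sigma\int_\T v_nb_nE_n\,dx .
\]
The right-hand side is bounded by $\sigma\|v_n\|_{L^\infty}\|b_n\|_{L^2}\|E_n\|_{L^2}$. Since $\|v_n\|_{L^\infty_x}\lesssim\|v_n\|_{H^1}$ in 1D and $v_n$ is bounded in $L^2_tH^1_x$, Gronwall's lemma yields a uniform bound on $(E_n,b_n)$ in $L^\infty_tL^2_x$. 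The same bound controls $j_n$ in $L^2_{t,x}$. Uniqueness of energy solutions with coefficient $v$ follows by the same estimate applied to the difference of two solutions. Hence it suffices to show that every subsequence has a further subsequence converging to $(E,b)$ in $C_tL^2_x$.

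\textbf{Fixed frequency compactness.} Let $P_N$ be the Fourier projection onto $|k|\le N$. The equations give
\[
\partial_t P_N(E_n,b_n)=P_N\big(b_{n,x}-\sigma E_n-\sigma v_nb_n,\ E_{n,x}\big)/(\eps,1),
\]
which is bounded in $L^2_tL^2_x$ for fixed $N$, since $v_nb_n$ is bounded in $L^2_tL^2_x$. Aubin--Lions/Arzel\`a--Ascoli then makes $P_N(E_n,b_n)$ precompact in $C_tL^2_x$ for each $N$. A diagonal argument gives weak-$*$ limits $(\tilde E,\tilde b)$ with $P_N(E_n,b_n)\to P_N(\tilde E,\tilde b)$ strongly for every $N$.

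To identify the limit we must pass to the limit in $v_nb_n$, a weak--weak product. The key extra input is that $v_n$ has a derivative in $L^2$. Write
\[
v_nb_n=v_nP_Nb_n+v_n(I-P_N)b_n .
\]
The first piece converges weakly, because $P_Nb_n$ converges strongly and $v_n\weak v$. For the second piece, test against a smooth $\varphi$ and move the multiplier: $\int v_n\varphi\,(I-P_N)b_n=\int (I-P_N)(v_n\varphi)\,b_n$. Then
\[
\|(I-P_N)(v_n\varphi)\|_{L^2_{t,x}}\lesssim N^{-1}\|v_n\varphi\|_{L^2_tH^1_x},
\]
which is uniformly small. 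Hence $\tilde E,\tilde b$ solve the limit equation with coefficient $v$, and by uniqueness $(\tilde E,\tilde b)=(E,b)$.

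\textbf{Uniform tail removal (main obstacle).} We must show $\sup_n\|(I-P_N)(E_n,b_n)\|_{C_tL^2}\to0$ as $N\to\infty$; together with the fixed-frequency step this gives \eqref{eq:maxwell-strong}. My plan is an energy estimate for the high modes. Apply $Q_N=I-P_N$, or better a smooth multiplier $\chi(D/N)$ to keep commutators tame, to the system and take energy. The dissipative term $\sigma\|Q_NE_n\|^2$ is favourable, and the $\partial_x$ terms cancel. The coupling yields $-\sigma\int Q_N(v_nb_n)\,Q_NE_n$, which we split as
\[
Q_N(v_nb_n)=v_nQ_Nb_n+[Q_N,v_n]b_n .
\]
The term $v_nQ_Nb_n$ is handled by Gronwall with weight $\|v_n\|_{H^1}\in L^2_t$, uniformly in $n$. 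The commutator $[\chi(D/N),v]$ is bounded on $L^2$ with norm $\lesssim N^{-1/2}\|v\|_{H^1}$, and $L^\infty\to$ Sobolev estimates should give a uniform small factor; this is where the 1D $H^1$ regularity is used in earnest.

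The initial high modes $\|Q_N(E_{0,n},b_{0,n})\|$ are uniformly small because the initial data converge strongly in $L^2$, so the family is compact. Gronwall then gives a uniform tail bound of the form
\[
\|Q_N(E_n,b_n)(t)\|^2\le C\big(\|Q_N(E_{0,n},b_{0,n})\|^2+o_N(1)\big).
\]
If the commutator bound turns out to be delicate, the fallback is to use the damped-wave structure in a Duhamel formulation.

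\textbf{Current and Lorentz force.} With strong convergence in $C_tL^2$ in hand, $v_nb_n\weak vb$ because one factor is weak in $L^2_tL^\infty$ and the other strong in $L^\infty_tL^2$. Hence $j_n\weak j$ in $L^2_{t,x}$. Similarly $j_nb_n$ is a product of a weakly convergent factor in $L^2_{t,x}$ and a strongly convergent factor in $L^2_{t,x}$, so it converges in $\D'$ to $jb$, which gives \eqref{eq:lorentz-stable}.
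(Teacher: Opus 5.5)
Your argument is correct, but it takes a genuinely different route from the paper's.

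The paper truncates the \emph{coefficient}, not the fields. By \eqref{eq:maxwell-Lip} and the tail bound \eqref{eq:coef-tail}, replacing $v_n$ by $P_\Lambda v_n$ costs $O(\Lambda^{-1/2})$ in $C_tL^2_x$, uniformly in $n$. At fixed $\Lambda$ the system is driven by finitely many scalar controls $c_{n,\ell}\weak c_\ell$ in $L^2(0,T)$, so a frozen-trajectory Duhamel argument and Gronwall give $X_n^\Lambda\to X^\Lambda$ in $C_t\mathcal H_\eps$ on the full infinite-dimensional space. No state compactness, no identification of a weak limit and no uniqueness in a weak class enter, and the whole sequence converges directly.

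You instead truncate the \emph{fields}: low modes are compact by Arzel\`a--Ascoli, and high modes are uniformly small by a high-frequency energy estimate whose only non-Gronwall term is a commutator. The step you left tentative does close, without the Duhamel fallback. For Lipschitz $\chi$ one has $|\chi(k/N)-\chi(k'/N)|\le C\min\{1,|k-k'|/N\}$, so Young's inequality on $\ell^1*\ell^2$ gives
\[
 \|[\chi(D/N),v]\|_{L^2\to L^2}\le C\sum_{m\ne0}\min\{1,|m|/N\}\,|\widehat v(m)|\le CN^{-1/2}\|v_x\|_{L^2},
\]
which is essentially the same 1D $\ell^1$ Fourier tail as \eqref{eq:coef-tail}. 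The error term then integrates to at most $CN^{-1/2}\|v_n\|_{L^2_tH^1_x}\|b_n\|_{L^\infty_tL^2_x}\|E_n\|_{L^2_{t,x}}$. Hence $\sup_n\|(I-\chi(D/N))X_n\|_{C_tL^2_x}^2\lesssim\sup_n\|(I-\chi(D/N))X_{0,n}\|_{L^2}^2+N^{-1/2}$, and the first term vanishes as $N\to\infty$ by compactness of $\{X_{0,n}\}$. The smooth multiplier is genuinely needed, not just convenient: for the sharp projection, $[P_N,\cos 2\pi x]\,e^{2\pi iNx}=-\tfrac12e^{2\pi i(N+1)x}$, so that commutator is $O(1)$.

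Two small points remain. First, your weak-$*$ limit is a priori only an $L^\infty_tL^2_x$ distributional solution, so the energy-uniqueness argument needs a spatial mollification. More simply, you can pass to the weak limit in the Duhamel formula \eqref{eq:maxwell-volterra}, whose integral term is a bounded linear function of $\mathcal B_{v_n}X_n$. Second, once the tail bound holds, subsequences are precompact in $C_tL^2_x$, so identification reduces to the weak--strong product argument and your moving-multiplier step can be dropped.

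Your route gives an explicit uniform tail modulus. It is essentially Aubin--Lions with the missing spatial compactness supplied by the commutator bound, and it adapts readily to other linear hyperbolic systems with multiplicative coefficients. The paper's route is softer: it needs only Lipschitz dependence on the coefficient and weak continuity of finite-input bilinear evolutions, and it avoids commutator analysis altogether.
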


The energy bounds give no spatial derivative control of $E_n$ or $b_n$,
so the usual Aubin--Lions--Simon argument~\cite{Aubin1963,Simon1987} does
not directly provide this closure.  
At a fixed spatial cutoff, the resulting finite-input bilinear evolution can be treated by classical weak-control continuity 
arguments; see, e.g., \cite{BallMarsdenSlemrod1982,Dirr2023}. We give the finite-sum argument needed here directly in \cref{sec:maxwell}.

Here a fixed spatial cutoff of the velocity produces finitely many scalar
time-dependent controls acting on the full infinite-dimensional Maxwell
energy space.  Removing that cutoff still requires the 1D
Fourier-tail estimate and stability bounds uniform in the cutoff.  The
strong initial-data hypothesis is also retained: this argument asserts
neither smoothing nor compactness for arbitrary bounded initial fields.
After strong field convergence, separate weak--strong product arguments give
the current and Lorentz-force limits.  Thus \cref{thm:maxwell} supplies the
PDE closure needed by the fluid approximation without requiring strong
velocity convergence.

\subsection{The global finite-energy existence result.}
The main result, \cref{thm:main}, constructs global finite-energy
weak solutions of
\eqref{eq:mass}--\eqref{eq:ohm} for every $\gamma>1$ and arbitrary
finite-energy initial data, including data with local vacuum.
The result retains both the displacement current and the algebraic Ohm law.

For the fluid part we use the 1D density-primitive version of
the classical inverse-divergence and effective-flux framework
\cite{Lions1998,FeireislNovotnyPetzeltova2001}.  These references supply the
compressible-fluid framework; their multidimensional existence theorems do
not directly establish the present 1D coupled result for every
$\gamma>1$.  On the torus the test is the mean-zero density primitive
\begin{equation}
 w=\partial_x^{-1}(\rho-M),\qquad
 M=\int_\T\rho\,dx>0.                                      \label{eq:w-def}
\end{equation}
Here and below, for every integrable periodic function $f$, we write
\[
 \meanx{f}:=\int_\T f(x)\,dx
\]
for its spatial mean; recall that $|\T|=1$.  The continuity equation then gives
$w_t=-\rho u+\meanx{\rho u}$.  Energy bounds therefore make $w$ compact in
space-time.  In the present coupled problem, its
strong compactness combines with the Maxwell--Ohm theorem to identify the
electromagnetic contribution involving $jbw$.  Testing the momentum equation
by $w$ then cancels two apparently nonintegrable copies of $\rho^2u^2$ before
either one is estimated and yields the gain
$\rho\in L^{\gamma+1}_{t,x}$.  The same identity provides the
effective-viscous-flux relation needed to identify the pressure.  What is
specific to the present problem is the combination of this classical fluid
test with weak-coefficient Maxwell compactness, which closes the full
electromagnetic-fluid limit.

The proof combines the following interdependent parts within the classical
artificial-viscosity and artificial-pressure scheme: the exact Maxwell
evolution at the Galerkin level, electromagnetic closure in the fluid limits,
the Lorentz force against moving density primitives, the uniform density gain,
and elimination of the pressure defects.  In the final pressure limit we
retain every possible physical and artificial pressure concentration as a
nonnegative measure. The density-primitive effective-flux identity, together with the
renormalized $\rho\log\rho$ identity, then forces all defects to vanish.
  The remaining
Lions--Feireisl compactness and renormalization steps are cited in their
standard form.

The uncoupled 1D compressible Navier--Stokes equations have a
classical large-data and nonsmooth-data theory; see, in particular,
Hoff~\cite{Hoff1986,Hoff1987}.  Those results do not contain the hyperbolic
weak--weak product that is the closure issue here.  For compressible
Navier--Stokes--Maxwell systems, local well-posedness with vacuum is studied
by Fan and Jia~\cite{FanJia2018} and Fan, Jing, Nakamura and
Tang~\cite{FanJingNakamuraTang2020}.  Fan and Hu~\cite{FanHu2014} study the
1D complete equations for an electromagnetic fluid.  Wave
stability is considered in
\cite{LuoYaoZhu2021,YaoZhu2021,YaoZhu2022}.  Hou, Yao and
Zhu~\cite[Theorem~2.2]{HouYaoZhu2016} establish global strong solutions with
large initial data and vacuum for a 1D initial-boundary problem
on $(0,1)$.  Their model includes a longitudinal Gauss constraint and the
transverse Amp\`ere source $\rho w$, where $w\in\R^2$ denotes their transverse
velocity; see \cite[equations~(1.5)--(1.6)]{HouYaoZhu2016}.  It does not impose
an algebraic Ohm law.  Their initial data have Sobolev regularity and satisfy
the compatibility conditions (1.9) of that paper.  The global theorem covers
$\gamma\ge2$, and general $\gamma>1$ under an additional boundary-trace
integral bound or sign condition.  It therefore does not contain the periodic
natural-energy existence theorem below.  The energy-equality result of Zhang, Huang and
Wu~\cite{ZhangHuangWu2023} concerns criteria for equality for given weak
solutions.  The result proved here concerns the precise periodic barotropic
system \eqref{eq:mass}--\eqref{eq:ohm}, with its displacement current and
algebraic Ohm law, in the initial-data and solution classes specified below.

Our fluid approximation follows the artificial-pressure and
artificial-viscosity architecture of the compressible theory
\cite{Lions1998,FeireislNovotnyPetzeltova2001}.  We keep the Maxwell--Ohm system exact throughout the fluid approximation.
This choice preserves its energy cancellation and permits direct application
of \cref{thm:maxwell} to the full hyperbolic evolution at each limit.
Higher-dimensional incompressible Navier--Stokes--Maxwell systems exhibit a
related energy-space compactness issue; see, for example,
\cite{ArsenioGallagher2020}.  The argument below is genuinely
1D and makes no claim for those systems.

\subsection{Finite-energy formulation and main existence theorem}
\label{subsec:weak-main}

Throughout the paper, the letter $M$ is reserved for the conserved total
mass.  The Fourier bandwidth in the compactness argument is denoted by
$\Lambda$. Let $p_0=2\gamma/(\gamma+1)>1$.  We use the conventions
\begin{equation*}|m_0|^2/\rho_0=0\;\text{ on }  \{\rho_0=0,m_0=0\} \text{ and }\,\, +\infty \,\text{ on }
\{\rho_0=0,m_0\ne0\}. \end{equation*}

\begin{definition}[Finite-energy weak solution]\label{def:weak}
Let $T>0$.  A quintuple $(\rho,u,E,b,j)$ is a finite-energy weak solution of
\eqref{eq:mass}--\eqref{eq:ohm} on $(0,T)\times\T$ if $\rho\ge0$,
$m=\rho u$, and
\begin{align*}
 &\rho\in C_w([0,T];L^\gamma(\T))\cap L^{\gamma+1}((0,T)\times\T),\\
 &m\in C_w([0,T];L^{p_0}(\T)),\qquad
 \sqrt\rho\,u\in L^\infty(0,T;L^2(\T)),\\
 &u\in L^2(0,T;H^1(\T)),\qquad
 E,b\in C([0,T];L^2(\T)),\qquad j\in L^2((0,T)\times\T),
\end{align*}
$j=\sigma(E+ub)$ almost everywhere, and the following identities hold for
smooth periodic tests vanishing at $t=T$:
\begin{align}
 &\int_0^T\!\!\int_\T(\rho\phi_t+\rho u\phi_x)\,dx\,dt
   +\int_\T\rho_0\phi(0)\,dx=0,                            \label{eq:weak-mass}\\
 &\int_0^T\!\!\int_\T\left[
 m\varphi_t+(mu+a\rho^\gamma)\varphi_x
 -\mu u_x\varphi_x-jb\varphi\right]\,dx\,dt
 +\int_\T m_0\varphi(0)\,dx=0,                            \label{eq:weak-mom}\\
 &\int_0^T\!\!\int_\T(\eps E\psi_t-b\psi_x-j\psi)\,dx\,dt
 +\eps\int_\T E_0\psi(0)\,dx=0,                           \label{eq:weak-E}\\
 &\int_0^T\!\!\int_\T(b\chi_t-E\chi_x)\,dx\,dt
 +\int_\T b_0\chi(0)\,dx=0.                              \label{eq:weak-b}
\end{align}
The continuity equation is renormalized: for every $B\in C^1([0,\infty))$
with compactly supported derivative,
\begin{equation}
 \partial_tB(\rho)+\partial_x(B(\rho)u)
 +[B'(\rho)\rho-B(\rho)]u_x=0                             \label{eq:renorm}
\end{equation}
in distributions.  Finally, for almost every $t\in(0,T)$,
\begin{align}
 &\mathcal E(t)+\int_0^t\!\!\int_\T
 \left(\mu|u_x|^2+\frac1\sigma|j|^2\right)\,dx\,ds
 \le \mathcal E_0,                                        \label{eq:energy-ineq}\\
 &\mathcal E_0=\int_\T\left[
 \frac12\frac{|m_0|^2}{\rho_0}
 +\frac{a}{\gamma-1}\rho_0^\gamma
 +\frac\eps2E_0^2+\frac12b_0^2\right]dx.                 \nonumber
\end{align}
\end{definition}

\begin{remark}[Nonlinear terms and velocity on vacuum]\label{rem:products}
All products in \cref{def:weak} are integrable at the stated regularity.  In
particular, $\rho u^2\in L^\infty_tL^1_x$ by the kinetic energy,
$\rho^\gamma\in L^{(\gamma+1)/\gamma}_{t,x}$ by the additional density gain,
and
\[
 ub\in L^2_{t,x},\qquad jb\in L^1_{t,x},
\]
because $H^1(\T)\hookrightarrow L^\infty(\T)$,
$u\in L^2_tH^1_x$, $b\in L^\infty_tL^2_x$, and $j\in L^2_{t,x}$.  Thus Ohm's
law is an almost-everywhere identity and the Lorentz force in
\eqref{eq:weak-mom} is an ordinary $L^1$ function.  The weak temporal
continuity of $\rho$ and $m$ follows from their distributional equations and
the indicated uniform spatial bounds.

The velocity $u$ in \cref{def:weak} is a genuine component of the solution,
not merely an abbreviation for $m/\rho$.  The identity $m=\rho u$ determines
$u$ almost everywhere on $\{\rho>0\}$, but the fluid pair $(\rho,m)$ alone
does not determine its $H^1$ extension on a vacuum set of positive measure.
Such an extension is not immaterial.  The unweighted viscous stress $\mu u_x$
already depends on it, and in the coupled system it also enters Ohm's law
$j=\sigma(E+ub)$.  Consequently, changing $u$ on a vacuum region while
keeping $(\rho,m)$ fixed will generally change the current, the Maxwell
evolution, and the Lorentz force, and need not preserve the momentum equation.

Accordingly, \cref{thm:main} is an existence theorem for at least one
quintuple $(\rho,u,E,b,j)$; it does not assert that $u$, and hence the
electromagnetic fields, are determined by $(\rho,m)$ alone, nor does it provide
a canonical selection from the initial data.  In the construction below, the
smooth positive-density approximations produce
velocities that converge, after extraction, weakly in $L^2_tH^1_x$ to the
velocity appearing in the limiting quintuple.  This is the extension selected
by the chosen approximating subsequence.  \end{remark}

The following global existence theorem is the main result of the paper;
\cref{thm:maxwell} supplies its electromagnetic closure.

\begin{theorem}[Global finite-energy existence with vacuum]\label{thm:main}
Let $\gamma>1$ and $a,\mu,\eps,\sigma>0$.  Assume
\begin{equation}
 \rho_0\ge0,\quad \rho_0\in L^\gamma(\T),\quad
 M:=\int_\T\rho_0\,dx>0,                                  \label{eq:data-rho}
\end{equation}
\begin{equation}
 m_0=0\ \text{a.e. on }\{\rho_0=0\},\qquad
 \frac{|m_0|^2}{\rho_0}\in L^1(\T),\qquad E_0,b_0\in L^2(\T).
                                                                    \label{eq:data-rest}
\end{equation}
Then \eqref{eq:mass}--\eqref{eq:ohm} possesses at least one global
finite-energy weak solution $(\rho,u,E,b,j)$ in the sense of
\cref{def:weak}.  Local vacuum is allowed, with $u$ understood as part of the
constructed solution as explained in \cref{rem:products}.  In addition, for
every finite $T$,
\begin{equation}
 \int_0^T\!\!\int_\T\rho^{\gamma+1}\,dx\,dt\le C_T.       \label{eq:main-gain}
\end{equation}
\end{theorem}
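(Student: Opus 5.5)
The proof follows the classical three-level Lions--Feireisl scheme, with the Maxwell--Ohm system kept exact at every level and \cref{thm:maxwell} used at each passage to the limit. The levels are a Faedo--Galerkin truncation in $u$, artificial viscosity $\delta\rho_{xx}$ in the continuity equation, and artificial pressure $\kappa\rho^\beta$ with $\beta$ large.

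\emph{Approximate problem.} At the Galerkin level $n$, $u_n$ lies in a finite-dimensional trigonometric space and $\rho$ solves the parabolic equation $\rho_t+(\rho u)_x=\delta\rho_{xx}$ with smooth positive data. The fields $(E,b)$ solve the exact linear Maxwell--Ohm system with coefficient $u_n$. Since $u_n$ is smooth in $x$, this is a linear symmetric hyperbolic system with a bounded zeroth-order term, so $(E,b)\in C_tL^2_x$ is well defined and satisfies its own energy identity. A fixed-point argument on short intervals, extended globally by the energy estimate
\[
\mathcal E_{\delta,\kappa}(t)+\int_0^t\!\!\int_\T\left(\mu|u_x|^2+\tfrac1\sigma|j|^2+\delta\text{-terms}\right)\le \mathcal E_{0},
\]
produces global approximate solutions. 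The electromagnetic coupling cancels exactly because the Lorentz force term $-jb$ is projected onto the Galerkin space and tested by $u_n$ itself.

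\emph{Limits $n\to\infty$ and $\delta\to0$.} In each limit the energy bound together with the positive mass gives $u\weak u$ in $L^2_tH^1_x$. \cref{thm:maxwell}, applied with strongly convergent (fixed) initial fields, then yields $(E,b)\to(E,b)$ strongly in $C_tL^2_x$, $j\weak j$, and $jb\to jb$ in $\D'$. The density gain comes from testing the momentum equation with the density primitive $w=\partial_x^{-1}(\rho-M)$. Using $w_t=-\rho u+\meanx{\rho u}$, the two $\rho^2u^2$ terms cancel, and one obtains
\[
\int\!\!\int (a\rho^\gamma+\kappa\rho^\beta)(\rho-M)\le C+\left|\int\!\!\int jbw\right|+\dots.
\]
Because $w$ is bounded in $L^\infty_{t,x}$ (since $\rho\in L^\infty_tL^1$) and $jb\in L^1$, the Lorentz term is harmless. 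This gives \eqref{eq:main-gain} uniformly in all parameters, with $\kappa\rho^{\beta+1}$ bounded as well.

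For the pressure, strong convergence of $w$ in $C_tL^2$, which follows from the bound on $w_t$, combines with the strong field convergence to pass to the limit in $\int jbw$. This yields the effective viscous flux identity $\overline{p(\rho)\rho}-\mu\overline{\rho u_x}=\overline p\rho-\mu\rho u_x$. Combined with the renormalized $\rho\log\rho$ equation (valid in 1D since $\rho\in L^2$), it gives $\overline{\rho\log\rho}=\rho\log\rho$ and hence strong convergence of $\rho$.

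\emph{Limit $\kappa\to0$ (main obstacle).} Here $\rho$ is bounded only in $L^{\gamma+1}$, so $\rho^\gamma$ and $\kappa\rho^\beta$ may concentrate. I would pass to the limit in the momentum equation keeping $\overline{p}=a\rho^\gamma+\nu$, where $\nu\ge0$ is a measure collecting both the physical and the artificial concentration defects. I would then redo the $w$-test in the limit. The identity now contains $\int\!\!\int w\,d\nu$-type terms and must be handled as a measure-valued effective-flux inequality. Through monotonicity of $\rho\mapsto\rho^\gamma$ and truncations $T_k(\rho)$, as in Feireisl's oscillation-defect approach, this inequality forces $\overline{T_k(\rho)\log}$ defects to vanish. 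The $L^{\gamma+1}$ bound gives equi-integrability of $\rho^\gamma$ against $\rho$, and I expect this to show $\nu=0$. The delicate point is justifying the effective-flux identity when $\nu$ is only a measure and $w$ is merely continuous. I would exploit the fact that in 1D $w\in C_{t,x}$, so pairing with $\nu$ makes sense.

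Finally, the energy inequality \eqref{eq:energy-ineq} follows by weak lower semicontinuity, and the initial data are regularized as $\rho_{0}^\delta\ge\delta$ with $m_0^\delta$ chosen so that $|m_0^\delta|^2/\rho_0^\delta\to|m_0|^2/\rho_0$ in $L^1$. The velocity $u$ in the limiting quintuple is the weak limit selected by the subsequence, as described in \cref{rem:products}.
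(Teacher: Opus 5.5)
Your architecture matches the paper's: exact Maxwell--Ohm evolution at every level, \cref{thm:maxwell} as the electromagnetic closure in each limit, the density primitive $w=\partial_x^{-1}(\rho-M)$ for the gain, and passage of $\int jbw$ via $\|w\|_\infty\le 2M$ and strong field convergence. The gap is in the step you call the main obstacle, the limit $\kappa\to0$. There the diagnosis is wrong, and the decisive argument is replaced by ``I expect this to show $\nu=0$''.

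The pressure itself does not concentrate. First, $\rho_\kappa^\gamma$ is bounded in the reflexive space $L^{(\gamma+1)/\gamma}$. Second, $\kappa\rho_\kappa^\beta\to0$ strongly in $L^{(\beta+1)/\beta}$, because $\|\kappa\rho_\kappa^\beta\|_{(\beta+1)/\beta}^{(\beta+1)/\beta}=\kappa^{1/\beta}\int\!\!\int\kappa\rho_\kappa^{\beta+1}$; this is \eqref{eq:art-p-zero}. Hence the limiting momentum equation carries the function $a\overline{\rho^\gamma}$. Your $\nu=a(\overline{\rho^\gamma}-\rho^\gamma)$ is an oscillation defect in $L^{(\gamma+1)/\gamma}$, not a concentration measure.

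What can concentrate are the products $a\rho_\kappa^{\gamma+1}$ and $\kappa\rho_\kappa^{\beta+1}$ on the left of the primitive identity. The $L^{\gamma+1}$ bound controls these only in $L^1$, so your claim that it gives equi-integrability ``of $\rho^\gamma$ against $\rho$'' fails for exactly these products. Moreover, in that identity the pressure is paired with $w_x=\rho-M\in L^{\gamma+1}$, not with $w$. The continuity of $w$, which is what handles the Lorentz term, is therefore irrelevant to the pressure, and the ``delicate point'' you single out is not the real one.

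The missing step is the content of \cref{prop:pressure-defect}:
\begin{enumerate}
\item Pass to the limit in the time-localized primitive identity, keeping the weak-$*$ limit $\overline{p\rho}$ of $p_\kappa(\rho_\kappa)\rho_\kappa$ as a nonnegative measure. The Lorentz term converges by the moving-test argument and, by the bound $C\tau^{1/2}$, puts no mass at $t=0$.
\item Subtract the identity satisfied by the limit. After integration in $x$ this gives $\overline{p\rho}-a\overline{\rho^\gamma}\rho=\mu(\overline{\rho u_x}-\rho u_x)$ in $\D'(0,T)$, and the left side is nonnegative by monotonicity of $z\mapsto z^\gamma$.
\item Compare the approximate and limiting renormalized $\rho\log\rho$ identities. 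This forces $\overline{\rho\log\rho}=\rho\log\rho$ and $\limsup_{\kappa\to0}\int_0^T\!\!\int_\T p_\kappa(\rho_\kappa)\rho_\kappa\le a\int_0^T\!\!\int_\T\rho^{\gamma+1}$.
\item Weak lower semicontinuity and uniform convexity of $L^{\gamma+1}$ then give strong convergence of $\rho_\kappa$ and the vanishing of the artificial concentration.
\end{enumerate}
A truncated Feireisl-type argument with $\partial_x^{-1}(T_k(\rho_\kappa)-\meanx{T_k(\rho_\kappa)})$ would also work, but it has to be carried out rather than anticipated.

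A smaller point concerns the initial data. You regularize the initial density at the viscosity level $\delta$, but at fixed $\kappa$ the augmented initial energy contains $\kappa\int(\rho_0^\delta)^\beta$. This is not bounded uniformly in $\delta$ unless $\rho_0\in L^\beta$. The fluid data must instead be regularized at the artificial-pressure level, with $\kappa\int\rho_{0,\kappa}^\beta\to0$, as the paper does with $\rho_{0,\eta}$.
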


The remainder of the paper is organized as follows.
\Cref{sec:primitive} establishes the weighted Poincar\'e estimate, compactness
of the density primitive and the pressure-gain identity.
\Cref{sec:maxwell}  proves the Maxwell--Ohm closure theorem by a fixed-cutoff argument and uniform coefficient approximation.
\Cref{sec:application} completes the
classical finite-energy construction, emphasizing only the coupling-specific
compactness and pressure-defect arguments.

\section{Energy consequences and the density primitive}
\label{sec:primitive}

The next elementary estimate is the reason that positive total mass,
rather than a pointwise lower density bound, controls the unweighted velocity.
If $\rho\ge0$, $\int_\T\rho=M>0$, $\sqrt\rho\,u\in L^2$ and $u\in H^1$, then
\begin{equation}
 \|u\|_{H^1}\le C\left(\|u_x\|_2+M^{-1/2}\|\sqrt\rho\,u\|_2\right).
 \label{lem:weighted-poincare}
\end{equation}
Indeed, let $\meanx{u}=\int_\T u$.  Since
$\|u-\meanx{u}\|_\infty\le C\|u_x\|_2$,
\[
 M|\meanx{u}|\le\left|\int\rho u\right|+
 \int\rho|u-\meanx{u}|
 \le M^{1/2}\|\sqrt\rho u\|_2+CM\|u_x\|_2.
\]
The ordinary Poincar\'e inequality gives the estimate.

For a mean-zero periodic distribution $f$, let $\partial_x^{-1}f$ denote its
mean-zero primitive.  This is the periodic 1D realization of the
inverse-divergence operator used in the standard pressure and effective-flux
estimates for compressible Navier--Stokes equations
\cite{Lions1998,FeireislNovotnyPetzeltova2001}.  If a sequence satisfies the
energy bounds and the continuity equation, define
\begin{equation}
 w_n=\partial_x^{-1}(\rho_n-M_n),\qquad M_n=\int_\T\rho_n.
\end{equation}
Then
\begin{equation}
 (w_n)_t=-\rho_nu_n+\meanx{\rho_nu_n}.                     \label{eq:w-time}
\end{equation}

The following elementary 1D compactness consequence of the continuity
equation will be used to handle the Lorentz force against the moving density
primitive.

\begin{lemma}[Compact primitive]\label{lem:primitive}
Assume $\gamma>1$, $M_n\to M>0$,
$\rho_n$ is bounded in $L^\infty_tL^\gamma_x$, and $u_n$ is bounded in
$L^2_tH^1_x$.  If the continuity equation holds, then, after extraction,
there exists $\rho\in L^\infty(0,T;L^\gamma(\T))$ such that
$\rho_n\weakstar\rho$ in $L^\infty_tL^\gamma_x$ and
\begin{equation}
 w_n\longrightarrow w=\partial_x^{-1}(\rho-M)
 \quad\text{in }C([0,T]\times\T).                          \label{eq:w-compact}
\end{equation}
\end{lemma}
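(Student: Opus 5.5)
The plan is to apply Arzelà--Ascoli to the family $\{w_n\}$ on $[0,T]\times\T$. This needs a uniform bound together with equicontinuity in space and time. The limit will then be identified through the weak-$*$ limit of $\rho_n$.

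\emph{Spatial regularity.} Since $\rho_n$ is bounded in $L^\infty_tL^\gamma_x$ and $M_n\to M$, we have $(w_n)_x=\rho_n-M_n$ bounded in $L^\infty_tL^\gamma_x$. Each $w_n(t,\cdot)$ has zero mean, so
\[
|w_n(t,x)-w_n(t,y)|\le \|\rho_n(t)-M_n\|_{L^\gamma}|x-y|^{1-1/\gamma}\le C|x-y|^{1-1/\gamma}.
\]
Because $w_n(t,\cdot)$ vanishes somewhere, this also gives $\|w_n\|_{L^\infty_{t,x}}\le C$. Here $\gamma>1$ is used to get a positive H\"older exponent.

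\emph{Temporal regularity.} I use \eqref{eq:w-time}, which is justified in distributions from the continuity equation by taking the primitive. The flux $\rho_nu_n$ is bounded in $L^2_tL^\gamma_x$, since $\|\rho_nu_n\|_{L^\gamma}\le\|\rho_n\|_{L^\gamma}\|u_n\|_{L^\infty}$ and $u_n$ is bounded in $L^2_tL^\infty_x$ via $H^1(\T)\hookrightarrow L^\infty(\T)$. Hence $(w_n)_t$ is bounded in $L^2_tL^\gamma_x$, and
\[
\|w_n(t)-w_n(s)\|_{L^\gamma_x}\le C|t-s|^{1/2}.
\]
This is only an $L^\gamma_x$ modulus, which I upgrade to a uniform one by interpolating with the spatial H\"older bound. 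For a function $f=w_n(t)-w_n(s)$ with $[f]_{C^\alpha}\le C$, $\alpha=1-1/\gamma$, one has
\[
\|f\|_\infty\le C\,\|f\|_{L^\gamma}^{\theta}\,[f]_{C^\alpha}^{1-\theta}
\]
for some $\theta\in(0,1)$. This follows by averaging $f$ over an interval of length $r$ and optimizing $r$. The result is $\sup_x|w_n(t,x)-w_n(s,x)|\le C|t-s|^{\theta/2}$.

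\emph{Compactness and identification.} After the modification $\rho_n(t)$ is defined for all $t$, the family is uniformly bounded and equicontinuous on $[0,T]\times\T$. Arzelà--Ascoli then gives, after extraction, $w_n\to \tilde w$ uniformly. By Banach--Alaoglu, with $L^\infty_tL^\gamma_x$ the dual of $L^1_tL^{\gamma'}_x$, a further subsequence has $\rho_n\weakstar\rho$, and $\rho\ge0$ has mean $M$ for a.e.\ $t$ because $M_n\to M$. Then $(w_n)_x=\rho_n-M_n\to\rho-M$ in distributions, so $\tilde w_x=\rho-M$. Since each $w_n(t)$ has zero mean, uniform convergence preserves $\int_\T\tilde w(t)\,dx=0$, hence $\tilde w=\partial_x^{-1}(\rho-M)$.

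\emph{Main obstacle.} The delicate point is the time regularity. One must make \eqref{eq:w-time} rigorous for the given weak solutions and obtain pointwise-in-time values. I would first show that $w_n\in C([0,T];L^\gamma)$ through the $W^{1,2}_tL^\gamma_x$ bound, and only then apply the interpolation argument, with the $L^\gamma$-to-$L^\infty$ upgrade as the step to check carefully.
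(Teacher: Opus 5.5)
Your proposal is correct and follows essentially the same route as the paper. Both arguments use a uniform $L^\infty_tW^{1,\gamma}_x$ (equivalently $C^{1-1/\gamma}_x$) bound on $w_n$ and the $L^2_tL^\gamma_x$ bound on $(w_n)_t$ from \eqref{eq:w-time}, which gives a $|t-s|^{1/2}$ modulus in $L^\gamma$. That modulus is upgraded to a uniform one by interpolation, and the proof finishes with Arzel\`a--Ascoli and identification of the limit through $(w_n)_x=\rho_n-M_n$ and the zero-mean normalization.

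Your averaging argument is an explicit version of the paper's 1D Gagliardo--Nirenberg step. The purely multiplicative form $\|f\|_\infty\le C\|f\|_{L^\gamma}^{\theta}[f]_{C^\alpha}^{1-\theta}$ fails for nonzero constants. It is valid here only because $f=w_n(t)-w_n(s)$ has zero mean, and so vanishes somewhere.
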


\begin{proof}
The normalization $\int_\T w_n=0$ and $(w_n)_x=\rho_n-M_n$ give
\[
 \sup_n\|w_n\|_{L^\infty_tW^{1,\gamma}_x}<\infty.
\]
Moreover,
\[
 \|\rho_nu_n\|_{L^2_tL^\gamma_x}
 \le \|\rho_n\|_{L^\infty_tL^\gamma_x}
     \|u_n\|_{L^2_tL^\infty_x}.
\]
Thus \eqref{eq:w-time} gives a uniform $1/2$-H\"older modulus in time with
values in $L^\gamma$:
\[
 \|w_n(t)-w_n(s)\|_{L^\gamma}
 \le C|t-s|^{1/2}.
\]
The 1D Gagliardo--Nirenberg inequality, applied to
$w_n(t)-w_n(s)$ and combined with the uniform $W^{1,\gamma}$ bound, upgrades
this to a common modulus of continuity with values in $C(\T)$.  At each fixed
time, $W^{1,\gamma}(\T)\Subset C(\T)$ because $\gamma>1$.  Arzel\`a--Ascoli
therefore gives \eqref{eq:w-compact}.  Finally, weak convergence of the
densities and $(w_n)_x=\rho_n-M_n$ identify the distributional derivative of
the limit, while the zero-mean normalization identifies $w$ itself.
\end{proof}

The next calculation is first stated for smooth solutions.  Its regularized
and low-regularity versions are justified in \cref{sec:application}.

\begin{lemma}[Density-primitive identity]\label{lem:primitive-identity}
Let $(\rho,u,E,b,j)$ be a smooth solution on $[0,T]\times\T$, let
$m=\rho u$, and suppose the pressure $p:[0,\infty)\to[0,\infty)$ is
nondecreasing.  With $w$ defined by \eqref{eq:w-def}, one has
\begin{align}
 \int_0^T\!\!\int_\T p(\rho)(\rho-M)
 ={}&\left[\int_\T mw\right]_0^T
 +\int_0^T\left(M\int_\T\rho u^2-|\meanx{m}|^2\right)dt   \label{eq:primitive-id}\\
 &+\mu\int_0^T\!\!\int_\T u_x(\rho-M)
 +\int_0^T\!\!\int_\T jbw.                              \nonumber
\end{align}
For $p(\rho)=a\rho^\gamma$, the energy bound implies
\begin{equation}
 \int_0^T\!\!\int_\T\rho^{\gamma+1}\,dx\,dt\le C_T.       \label{eq:pressure-gain}
\end{equation}
The same conclusion holds for a pressure containing additional nonnegative
monomials, with the corresponding higher powers on the left-hand side.
\end{lemma}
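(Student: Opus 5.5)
The plan is to test the momentum equation \eqref{eq:mom}, with general pressure $p$, against the density primitive $w$ of \eqref{eq:w-def} and integrate over $(0,T)\times\T$.

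\textbf{Time-derivative term.} Integrating by parts in time and using \eqref{eq:w-time} gives
\[
\int_0^T\!\!\int_\T m_t w=\Big[\int_\T mw\Big]_0^T+\int_0^T\!\!\int_\T m(\rho u-\meanx{m})
=\Big[\int_\T mw\Big]_0^T+\int_0^T\Big(\int_\T\rho^2u^2-|\meanx{m}|^2\Big)dt .
\]

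\textbf{Flux, pressure and viscous terms.} Integrating by parts in space and using $w_x=\rho-M$ produces
\[
-\int_0^T\!\!\int_\T(\rho u^2+p(\rho)-\mu u_x)(\rho-M).
\]
The cubic term $\int\rho^2u^2$ here cancels exactly against the one from the time derivative, before either is estimated. What remains of the convective part is $M\int\rho u^2$.

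\textbf{Lorentz term.} The right-hand side contributes $-\int_0^T\!\!\int_\T jbw$.

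\textbf{Assembling.} Collecting the terms and moving the pressure to the left yields \eqref{eq:primitive-id}, up to the sign convention for the $jbw$ term. The sign of $jbw$ should be rechecked against the momentum equation, but it plays no role in the estimate.

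\textbf{Bounding the right-hand side.} I then bound each term of the right-hand side by the energy \eqref{eq:natural-bounds}.
\begin{itemize}[leftmargin=*]
\item \emph{Boundary term.} Since $\|w\|_\infty\le\|\rho-M\|_{L^1}\le 2M$, we get $|\int mw|\le 2M\|m\|_{L^1}$. Also $\|m\|_{L^1}\le \|\sqrt\rho\|_2\|\sqrt\rho u\|_2\le M^{1/2}(2\mathcal E_0)^{1/2}$.
\item \emph{Kinetic term.} $M\int\rho u^2-|\meanx m|^2\le 2M\mathcal E_0$.
\item \emph{Viscous term.} By Cauchy--Schwarz,
\[
\mu\int_0^T\!\!\int_\T u_x(\rho-M)\le \mu\|u_x\|_{L^2_{t,x}}\|\rho-M\|_{L^2_{t,x}} .
\]
\item \emph{Lorentz term.} $\big|\int jbw\big|\le 2M\|j\|_{L^2_{t,x}}\|b\|_{L^2_{t,x}}\le CT^{1/2}$.
\end{itemize}

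\textbf{Closing the estimate (main obstacle).} The only delicate point is that $\|\rho\|_{L^2}$ is not controlled by the energy when $\gamma<2$. The plan is to absorb it into the left-hand side.
\begin{itemize}[leftmargin=*]
\item For $p=a\rho^\gamma$, split
\[
\int p(\rho)(\rho-M)=a\int\rho^{\gamma+1}-aM\int\rho^\gamma ,
\]
where the second piece is bounded by the energy.
\item Interpolate: $\|\rho\|_{L^2_{t,x}}^2\le \delta\int\rho^{\gamma+1}+C_\delta(T,M)$. This holds since $\rho^2\le\delta\rho^{\gamma+1}+C_\delta\rho$ pointwise (as $2<\gamma+1$), and $\int\rho=M$.
\item Young's inequality then absorbs the viscous term into $\tfrac a2\int\rho^{\gamma+1}$, giving \eqref{eq:pressure-gain}.
\item For additional nonnegative monomials $\kappa\rho^\beta$, the same splitting applies. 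The term $M\int\rho^\beta$ is controlled by the corresponding energy, or absorbed into $\int\rho^{\beta+1}$ by the same pointwise Young argument.
\end{itemize}

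Monotonicity of $p$ is not needed for the identity itself. It only guarantees that the negative part $p(\rho)(\rho-M)$ on $\{\rho<M\}$ is bounded by $p(M)M$, which is another way to handle the lower-order piece.
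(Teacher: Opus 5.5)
Your proposal is correct and is essentially the paper's proof: test the momentum equation by $w$, cancel the two copies of $\int\!\!\int\rho^2u^2$ before estimating, bound the boundary, kinetic and Lorentz terms by mass and energy, and absorb the viscous term into $a\int\!\!\int\rho^{\gamma+1}$. The only variation is in that last step: the paper applies Young with exponents $\gamma+1$ and $(\gamma+1)/\gamma$ directly to $|u_x|\rho$, while your Cauchy--Schwarz together with $\rho^2\le\delta\rho^{\gamma+1}+C_\delta\rho$ is an equivalent variant. Also, the sign you flagged needs no adjustment: the tested equation reads $\cdots-\int\!\!\int p(\rho)(\rho-M)=-\int\!\!\int jbw$, which rearranges to \eqref{eq:primitive-id} with $+\int\!\!\int jbw$ exactly as stated.
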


\begin{proof}
The continuity equation yields $w_t=-m+\meanx{m}$, where
$\meanx{m}(t)=\int_\T m(t,x)\,dx$.  Multiply \eqref{eq:mom} by $w$, integrate in
space and time, and use $w_x=\rho-M$.  The time derivative gives
\[
 \int_0^T\!\!\int_\T m_t w
 =\left[\int_\T mw\right]_0^T
  +\int_0^T\!\!\int_\T m^2
  -\int_0^T|\meanx{m}|^2\,dt,
\]
whereas the convective flux contributes
\[
 -\int_0^T\!\!\int_\T \rho u^2(\rho-M)
 =-\int_0^T\!\!\int_\T\rho^2u^2
   +M\int_0^T\!\!\int_\T\rho u^2.
\]
Since $m^2=\rho^2u^2$, the two terms which need not be integrable separately
cancel algebraically.  The remaining terms give \eqref{eq:primitive-id}.

To obtain the estimate, first note that
$\|w(t)\|_\infty\le\|\rho(t)-M\|_1\le2M$.  The boundary term and the two
convective terms in \eqref{eq:primitive-id} are controlled by mass and kinetic
energy.  The electromagnetic term satisfies
\[
 \int_0^T\!\!\int_\T |jbw|
 \le 2M\|j\|_{L^2_{t,x}}
       \|b\|_{L^2_tL^2_x}.
\]
Finally, for every $\kappa>0$, Young's inequality gives
\[
 \mu\int_0^T\!\!\int_\T |u_x|\rho
 \le \kappa\int_0^T\!\!\int_\T\rho^{\gamma+1}
 +C_{\kappa,T}\int_0^T\!\!\int_\T
      |u_x|^{(\gamma+1)/\gamma}.
\]
The last integral is controlled by the $L^2$ dissipation because
$(\gamma+1)/\gamma<2$.  On the left of \eqref{eq:primitive-id},
\[
 a\rho^\gamma(\rho-M)=a\rho^{\gamma+1}-aM\rho^\gamma,
\]
and the negative part is controlled by the internal energy.  Choosing
$\kappa<a/2$ proves \eqref{eq:pressure-gain}.  Each additional nonnegative
monomial is treated in the same way.
\end{proof}

\section{Weak-coefficient compactness for Maxwell--Ohm}
\label{sec:maxwell}

We first establish the energy well-posedness and stability estimates for the
Maxwell--Ohm subsystem.

For a prescribed coefficient $v\in L^2(0,T;H^1(\T))$, we consider
\begin{equation}
\label{MOwithinitial}
 \eps E_t-b_x=-\sigma(E+vb),\qquad
 b_t-E_x=0,\qquad (E,b)|_{t=0}=(E_0,b_0).
\end{equation}
At the natural energy level one controls $E$ and $b$ in $L^2_x$, but not
their spatial derivatives.  Thus \eqref{MOwithinitial} need not define an
ordinary differential equation with values in $L^2(\T)^2$.  We therefore use
the semigroup formulation associated with the constant-coefficient Maxwell
operator and call the resulting mild (variation-of-constants) solution the
\emph{energy solution}.  Lemma~\ref{lem:maxwell-energy} constructs this mild
solution, proves the energy bounds below, and, through its Fourier--Galerkin
approximation, also shows that it satisfies \eqref{MOwithinitial} in
spacetime distributions.

Its current $j=\sigma(E+vb)$ belongs to $L^2_{t,x}$: indeed,
$H^1(\T)\hookrightarrow L^\infty(\T)$ gives
\[
 \|vb\|_{L^2_{t,x}}
 \le \|v\|_{L^2_tL^\infty_x}\|b\|_{L^\infty_tL^2_x}<\infty,
\]
while $E\in C([0,T];L^2(\T))\subset L^2_{t,x}$.

\begin{lemma}[Energy well-posedness]\label{lem:maxwell-energy}
For every $v\in L^2_tH^1_x$ and
$X_0=(E_0,b_0)\in L^2(\T)^2$, system \eqref{MOwithinitial}
has a unique energy solution
\[
 X=(E,b)^{\mathsf T}\in C([0,T];L^2(\T)^2).
\]
Moreover, on bounded subsets of
\[
 L^2(0,T;H^1(\T))\times L^2(\T)^2,
\]
the solution satisfies
\begin{equation}
 \|X\|_{C_tL^2_x}^2
 +\|E\|_{L^2_{t,x}}^2
 +\|j\|_{L^2_{t,x}}^2
 \le C.
 \label{eq:maxwell-energy-bound}
\end{equation}
If $X_v$ and $X_z$ have coefficients $v,z$ and initial data
$X_{v,0},X_{z,0}$, respectively, then
\begin{equation}
 \|X_v-X_z\|_{C_tL^2_x}^2
 \le C\bigl(
   \|X_{v,0}-X_{z,0}\|_2^2
   +\|v-z\|_{L^2_tL^\infty_x}^2
 \bigr).
 \label{eq:maxwell-Lip}
\end{equation}
Here $C$ may be chosen to depend only on $T,\eps,\sigma$ and upper bounds for
$\|v\|_{L^2_tH^1_x}$, $\|z\|_{L^2_tH^1_x}$,
$\|X_{v,0}\|_{L^2_x}$, and $\|X_{z,0}\|_{L^2_x}$.
\end{lemma}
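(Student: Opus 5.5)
We prove \cref{lem:maxwell-energy} in four steps: write the system as a bounded perturbation of a unitary group, construct the mild solution by a Picard iteration in $C([0,T];L^2(\T)^2)$, derive the energy bounds through Fourier--Galerkin truncation, and obtain the stability estimate from a difference energy identity.

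\emph{Step 1: mild formulation.} Write $X=(E,b)^{\mathsf T}$ and let $A X=(\eps^{-1}b_x,E_x)$ on $D(A)=H^1(\T)^2$. This operator is skew-adjoint for the weighted inner product $\langle X,Y\rangle_\eps=\int(\eps E_1E_2+b_1b_2)$. By Stone's theorem, $A$ generates a unitary group $S(t)$ on $L^2(\T)^2$, and $S(t)$ commutes with Fourier projections. The lower-order term is
\[
 F(t)X=-\sigma\eps^{-1}\bigl(E+v(t)b,\,0\bigr).
\]
For a.e.\ $t$ this is a bounded operator on $L^2$, with
\[
 \|F(t)\|\le C\bigl(1+\|v(t)\|_{L^\infty}\bigr)\in L^2(0,T)\subset L^1(0,T).
\]
The energy solution is then the fixed point of
\[
 X(t)=S(t)X_0+\int_0^tS(t-s)F(s)X(s)\,ds.
\]
Since $\int_0^T\|F\|<\infty$, a Picard iteration in $C([0,T];L^2)$ converges. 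One can either use a weighted sup norm $\sup_t e^{-\lambda\int_0^t\|F\|}\|X(t)\|$, or a bounded number of short-time steps in which $\int\|F\|$ is small. This gives existence and uniqueness of the mild solution, and a Gronwall bound $\|X\|_{C_tL^2}\le \|X_0\|e^{\int\|F\|}$. That bound alone is not uniform in the desired sense, since it involves the $L^1_t$ norm of $\|v\|_\infty$; we refine it in Step 2.

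\emph{Step 2: energy bound via Galerkin.} Let $P_N$ be the Fourier projection onto $|k|\le N$, and solve the ODE
\[
 X^N_t=AX^N+P_NF(t)X^N,\qquad X^N(0)=P_NX_0,
\]
on the finite-dimensional range of $P_N$. Testing with $X^N$ in $\langle\cdot,\cdot\rangle_\eps$, the skew term drops out, and since $P_N$ is self-adjoint and fixes $X^N$,
\[
 \tfrac12\tfrac{d}{dt}\bigl(\eps\|E^N\|^2+\|b^N\|^2\bigr)
 =-\sigma\!\int(E^N+vb^N)E^N
 =-\tfrac1\sigma\|j^N\|^2+\sigma\!\int (E^N+vb^N)vb^N,
\]
where $j^N=\sigma(E^N+vb^N)$. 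Note that the exact cancellation happens only in the coupled fluid system, because there the $jub$ term is absorbed by the momentum equation. Here we estimate instead:
\[
 \sigma\Bigl|\int(E^N+vb^N)vb^N\Bigr|\le\tfrac1{2\sigma}\|j^N\|^2+C\|v\|_\infty^2\|b^N\|^2 .
\]
Gronwall with weight $\|v(t)\|_\infty^2\in L^1_t$ then gives a bound on $\|X^N\|_{C_tL^2}$ and on $\|j^N\|_{L^2_{t,x}}$ depending only on $\|v\|_{L^2_tH^1}$ and $\|X_0\|$; the bound on $\|E^N\|_{L^2_{t,x}}$ follows trivially. Next, the difference $X^N-X$ satisfies a Duhamel identity with sources $(P_N-I)S(t)X_0$ and $(P_N-I)F(s)X(s)$. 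Dominated convergence, using the strong continuity of $s\mapsto X(s)$ and $\int\|F\|<\infty$, shows $X^N\to X$ in $C_tL^2$. Hence the bound \eqref{eq:maxwell-energy-bound} passes to $X$, and the limit of the weak Galerkin formulation shows that $X$ solves \eqref{MOwithinitial} in distributions.

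\emph{Step 3: stability.} Set $D=X_v-X_z=(\delta E,\delta b)$. At the Galerkin level, then passing to the limit as in Step 2,
\[
 \eps\delta E_t-\delta b_x=-\sigma\bigl(\delta E+v\,\delta b+(v-z)b_z\bigr),\qquad \delta b_t-\delta E_x=0.
\]
The same energy computation bounds the two dangerous terms by
\[
 C\|v\|_\infty^2\|\delta b\|^2+C\|v-z\|_\infty^2\|b_z\|^2 .
\]
Since $\|b_z\|_{C_tL^2}$ is already controlled by Step 2, Gronwall gives \eqref{eq:maxwell-Lip} with $\int_0^T\|v-z\|_\infty^2$ on the right.

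\emph{Main obstacle.} The delicate point is that the energy identity holds only formally at $L^2$ regularity. It must be justified through the Galerkin scheme together with the identification of the Galerkin limit with the mild solution; the bounds themselves are routine once the computation is legitimate. Uniqueness in the class of energy solutions is exactly uniqueness of the mild fixed point, which the Picard argument already provides.
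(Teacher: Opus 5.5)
Your proposal is correct and follows essentially the paper's route: a mild solution from a semigroup plus a time-integrable bounded perturbation, Fourier--Galerkin approximation (with the projection commuting with the group) to justify the energy identity and the distributional formulation, and Young plus Gronwall for both the energy bound and the coefficient-stability bound. The differences are cosmetic: you place the Ohmic damping $-\sigma\eps^{-1}E$ in the perturbation of a unitary group rather than in the generator of a contraction semigroup, and you absorb via $j$ rather than via $E$. One small correction: your remark that the Duhamel--Gronwall bound of Step~1 is not uniform is mistaken, since $\int_0^T\|F(t)\|\,dt\le C\bigl(T+T^{1/2}\|v\|_{L^2_tL^\infty_x}\bigr)$ is already controlled on bounded sets, so that bound (and, applied to the difference equation, the stability estimate) would suffice on its own, with the Galerkin step needed only for the distributional formulation.
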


\begin{proof}
Use the weighted energy space
\[
 \mathcal H_\eps=L^2(\T)^2,\qquad
 X=\binom E b,\qquad
 \|X\|_{\mathcal H_\eps}^2=\eps\|E\|_2^2+\|b\|_2^2,
\]
and the operators
\[
 A_\sigma\binom E b
   =\binom{\eps^{-1}(b_x-\sigma E)}{E_x},\qquad
 D(A_\sigma)=H^1(\T)^2,\qquad
 \mathcal B_f\binom E b=\binom{-\sigma\eps^{-1}fb}{0}.
\]
Thus \eqref{MOwithinitial} is $$X_t=A_\sigma X+\mathcal B_vX.$$
Periodic integration by parts gives
$\langle A_\sigma X,X\rangle_{\mathcal H_\eps}=-\sigma\|E\|_2^2$.
For $\lambda>0$, the Fourier symbol of $\lambda-A_\sigma$ at
$\kappa_k=2\pi k$ is
\[
 \begin{pmatrix}\lambda+\sigma/\eps&-i\kappa_k/\eps\\
                 -i\kappa_k&\lambda\end{pmatrix},\qquad
 D_k=\lambda(\lambda+\sigma/\eps)+\kappa_k^2/\eps>0.
\]
The inverse matrix has entries bounded by $C_\lambda(1+|k|)^{-1}$,
so it maps $L^2(\T)^2$ into $H^1(\T)^2$.  Hence
$\operatorname{Ran}(\lambda-A_\sigma)=\mathcal H_\eps$; the densely
defined dissipative operator $A_\sigma$ is maximal dissipative and generates
the contraction semigroup $S(t)$ on $\mathcal H_\eps$.  In particular,
\begin{equation}
 \|S(t)Y\|_{\mathcal H_\eps}\le\|Y\|_{\mathcal H_\eps}
 \quad(t\ge0,\ Y\in\mathcal H_\eps).
 \label{eq:semigroup-contraction}
\end{equation}
For $f\in L^\infty(\T)$,
\begin{equation}
 \|\mathcal B_fX\|_{\mathcal H_\eps}^2
 =\sigma^2\eps^{-1}\|fb\|_2^2
 \le\sigma^2\eps^{-1}\|f\|_\infty^2\|X\|_{\mathcal H_\eps}^2.
 \label{eq:Bv-operator-bound}
\end{equation}
Since $v\in L^2_tH^1_x\hookrightarrow L^2_tL^\infty_x$, the function
$\beta_v(t)=\|\mathcal B_{v(t)}\|_{\mathcal L(\mathcal H_\eps)}$
belongs to $L^1(0,T)$.  The variation-of-constants equation is
\begin{equation}
 X(t)=S(t)X_0+\int_0^tS(t-s)\mathcal B_{v(s)}X(s)\,ds.
 \label{eq:maxwell-volterra}
\end{equation}
On each subinterval $I$ with $\int_I\beta_v<1/2$, its right-hand side is
a contraction on $C(I;\mathcal H_\eps)$ with the prescribed starting
state.  A finite partition into such intervals constructs the unique mild
solution on $[0,T]$; uniqueness also follows directly from Gronwall's
inequality.

To justify the energy calculation and distributional interpretation, let
$P_N$ be the componentwise Fourier projection onto $|k|\le N$.  It is
orthogonal in $\mathcal H_\eps$ and commutes with $A_\sigma$ and $S(t)$.
Solve
\[
 (X_N)_t=A_\sigma X_N+P_N\mathcal B_vX_N,\qquad
 X_N(0)=P_NX_0.
\]
Subtracting the mild equations and using Gronwall gives
\begin{equation}
 \|X_N-X\|_{C_t\mathcal H_\eps}
 \le e^{\|\beta_v\|_1}
 \bigl(\|(P_N-I)X_0\|_{\mathcal H_\eps}
       +\|(P_N-I)\mathcal B_vX\|_{L^1_t\mathcal H_\eps}\bigr)
 \longrightarrow0.
 \label{eq:galerkin-to-mild}
\end{equation}
The last term tends to zero by dominated convergence, since
$\mathcal B_vX\in L^1_t\mathcal H_\eps$.
For a smooth test, integration by parts in the projected equations and
$P_N\Psi\to\Psi$ in $H^1$ now give \eqref{MOwithinitial} in distributions,
including its initial data.  Conversely, a $C_t\mathcal H_\eps$
distributional solution has $\mathcal B_vX\in L^1_t\mathcal H_\eps$.
Testing against smooth spatial functions and using density in
$D(A_\sigma^*)=H^1(\T)^2$ gives the weak adjoint formulation; Ball's
variation-of-constants theorem~\cite{Ball1977} identifies it with
\eqref{eq:maxwell-volterra}.

Because $P_NX_N=X_N$, orthogonality gives
$\langle P_N\mathcal B_vX_N,X_N\rangle_{\mathcal H_\eps}
 =\langle\mathcal B_vX_N,X_N\rangle_{\mathcal H_\eps}$.
The Maxwell derivative terms cancel under periodic integration, leaving
\[
 \frac12\frac{d}{dt}\|X_N\|_{\mathcal H_\eps}^2
 +\sigma\|E_N\|_2^2=-\sigma\int_\T vb_NE_N\,dx.
\]
Convergence \eqref{eq:galerkin-to-mild} passes this identity to $X$ in
integrated form.  Young's inequality then yields
\begin{equation}
 \frac{d}{dt}\|X\|_{\mathcal H_\eps}^2+\sigma\|E\|_2^2
 \le\sigma\|v\|_\infty^2\|X\|_{\mathcal H_\eps}^2,
 \label{eq:maxwell-energy-differential}
\end{equation}
and consequently
\begin{equation}
 \sup_{t\le T}\|X(t)\|_{\mathcal H_\eps}^2
 \le\|X_0\|_{\mathcal H_\eps}^2
       e^{\sigma\|v\|_{L^2_tL^\infty_x}^2}.
 \label{eq:maxwell-sup-bound}
\end{equation}
Integration bounds $E$ in $L^2_{t,x}$, and
\[
 \|j\|_{L^2_{t,x}}^2
 \le2\sigma^2\bigl(\|E\|_{L^2_{t,x}}^2
       +\|v\|_{L^2_tL^\infty_x}^2\|b\|_{L^\infty_tL^2_x}^2\bigr)
\]
proves \eqref{eq:maxwell-energy-bound}.

For the coefficient stability, put $Y=X_v-X_z=(e,h)^{\mathsf T}$.
The same Galerkin calculation, followed by \eqref{eq:galerkin-to-mild},
justifies
\[
 \frac12\frac{d}{dt}\|Y\|_{\mathcal H_\eps}^2+\sigma\|e\|_2^2
 =-\sigma\int_\T vhe\,dx-\sigma\int_\T(v-z)b_ze\,dx.
\]
Here the Galerkin fields with coefficient $z$ obey
\eqref{eq:maxwell-sup-bound} uniformly in their projection index.
Young's inequality gives
\[
 \frac{d}{dt}\|Y\|_{\mathcal H_\eps}^2+\sigma\|e\|_2^2
 \le C\sigma\|v\|_\infty^2\|Y\|_{\mathcal H_\eps}^2
     +C\sigma\|v-z\|_\infty^2\|b_z\|_2^2.
\]
Gronwall, \eqref{eq:maxwell-sup-bound} for $X_z$, and equivalence of the
weighted and ordinary product $L^2$ norms prove \eqref{eq:maxwell-Lip}.
All constants depend only on the bounds listed in the lemma.  In
particular, they do not depend on any Fourier cutoff used to approximate
the coefficient.
\end{proof}

\begin{proof}[Proof of \cref{thm:maxwell}]
Write $X_n=(E_n,b_n)^{\mathsf T}$, $X=(E,b)^{\mathsf T}$ and
$X_{0,n}=(E_{0,n},b_{0,n})^{\mathsf T}$, $X_0=(E_0,b_0)^{\mathsf T}$.
Use $\mathcal H_\eps$, $S(t)$ and $\mathcal B_f$ from
\cref{lem:maxwell-energy}.

\smallskip
\noindent\emph{Step 1: uniform approximation of the coefficients.}
Let $P_\Lambda$ be the spatial Fourier projection onto $|k|\le\Lambda$.
For $f\in H^1(\T)$ and $\Lambda\ge1$,
\begin{equation}
 \|(I-P_\Lambda)f\|_\infty
 \le C\Lambda^{-1/2}\|f_x\|_2.
 \label{eq:coef-tail}
\end{equation}
Indeed, Cauchy--Schwarz gives
\[
 \sum_{|k|>\Lambda}|\widehat f(k)|
 \le\left(\sum_{|k|>\Lambda}|k|^{-2}\right)^{1/2}
     \left(\sum_{|k|>\Lambda}|k|^2|\widehat f(k)|^2\right)^{1/2}
 \le C\Lambda^{-1/2}\|f_x\|_2.
\]
Weak convergence bounds $v_n$ in $L^2_tH^1_x$, and $P_\Lambda$ is a
contraction on $H^1$.  Let $X_n^\Lambda$ and $X^\Lambda$ be the full energy
solutions with coefficients $P_\Lambda v_n$ and $P_\Lambda v$, and initial
data $X_{0,n}$ and $X_0$, respectively.  The coefficient-tail estimate and
\eqref{eq:maxwell-Lip} imply
\begin{equation}
 \sup_n\|X_n-X_n^\Lambda\|_{C_t\mathcal H_\eps}
       +\|X-X^\Lambda\|_{C_t\mathcal H_\eps}
 \le C\Lambda^{-1/2},
 \label{eq:cutoff-stability}
\end{equation}
where $C$ is independent of $n$ and $\Lambda$.  This uniformity follows
from the $H^1$ contraction and the bounded initial states; no uniform
$L^\infty$ operator norm for $P_\Lambda$ is being assumed.

\smallskip
\noindent\emph{
Step 2: convergence at fixed cutoff.}
For fixed $\Lambda$, in the real basis
\[
 \{e_\ell:1\le\ell\le d_\Lambda\}
 =\{1,\cos(2\pi kx),\sin(2\pi kx):1\le k\le\Lambda\},
 \qquad d_\Lambda=2\Lambda+1,
\]
write
\[
 P_\Lambda v_n=\sum_{\ell=1}^{d_\Lambda}c_{n,\ell}(t)e_\ell,
 \qquad
 P_\Lambda v=\sum_{\ell=1}^{d_\Lambda}c_\ell(t)e_\ell,
 \qquad B_\ell=\mathcal B_{e_\ell}.
\]
Each $B_\ell$ is bounded on $\mathcal H_\eps$ by
\eqref{eq:Bv-operator-bound}, and
\begin{equation}
 c_{n,\ell}\rightharpoonup c_\ell\quad\text{in }L^2(0,T).
 \label{eq:finite-mode-coeff-weak}
\end{equation}
This also implies weak convergence in $L^1(0,T)$, since
$L^\infty(0,T)\subset L^2(0,T)$.  Thus the truncated-coefficient equation is
a finite-input bilinear evolution
\[
 (X_n^\Lambda)_t=A_\sigma X_n^\Lambda
       +\sum_{\ell=1}^{d_\Lambda}c_{n,\ell}(t)B_\ell X_n^\Lambda
\]
on the full infinite-dimensional energy space.  Only the coefficient has
been truncated; there is no state Galerkin projection in this step.

We  now prove the required finite-sum convergence directly, using the standard frozen-trajectory argument; see \cite{BallMarsdenSlemrod1982,Dirr2023} for related weak-control continuity arguments. Put
\[
 F_{n,\ell}(t)=\int_0^t
 (c_{n,\ell}-c_\ell)(s)S(t-s)B_\ell X^\Lambda(s)\,ds.
\]
The kernel $K_\ell(t,s)=S(t-s)B_\ell X^\Lambda(s)$ is continuous on
$0\le s\le t\le T$: a strongly continuous semigroup acts jointly
continuously on time and state, and $X^\Lambda$ is continuous.
For each fixed $t$, uniformly approximate the continuous
$\mathcal H_\eps$-valued function $K_\ell(t,\cdot)$ on $[0,t]$ by a step
function.  Its integral against $c_{n,\ell}-c_\ell$ tends to zero by
\eqref{eq:finite-mode-coeff-weak}, while the approximation error is bounded
by its uniform error times $\|c_{n,\ell}-c_\ell\|_{L^1}$.
Hence $F_{n,\ell}(t)\to0$ in $\mathcal H_\eps$ for every $t$.
For $0\le r<t\le T$, split the difference of the integrals at $r$.
Uniform continuity of $K_\ell$ on the triangle and Cauchy--Schwarz give
\[
 \|F_{n,\ell}(t)-F_{n,\ell}(r)\|_{\mathcal H_\eps}
 \le C_\Lambda\omega_\ell(|t-r|)+C_\Lambda|t-r|^{1/2},
\]
where $\omega_\ell(h)\to0$ as $h\to0$, independently of $n$.
Pointwise convergence and this common modulus, applied to a finite time
net, yield $\|F_{n,\ell}\|_{C_t\mathcal H_\eps}\to0$.

Subtracting the two mild equations, and retaining the initial-data
variation, now gives
\[
 \begin{split}
 D_n(t):=X_n^\Lambda(t)-X^\Lambda(t)
 ={}&S(t)(X_{0,n}-X_0)+\sum_{\ell=1}^{d_\Lambda}F_{n,\ell}(t)\\
 &+\sum_{\ell=1}^{d_\Lambda}\int_0^t
   c_{n,\ell}(s)S(t-s)B_\ell D_n(s)\,ds.
 \end{split}
\]
Consequently,
\[
 \|D_n\|_{C_t\mathcal H_\eps}
 \le\left(\|X_{0,n}-X_0\|_{\mathcal H_\eps}
       +\sum_{\ell=1}^{d_\Lambda}\|F_{n,\ell}\|_{C_t\mathcal H_\eps}\right)
       \exp\!\left(\sum_{\ell=1}^{d_\Lambda}
          \|B_\ell\|\,\|c_{n,\ell}\|_{L^1}\right)
 \longrightarrow0.
\]
At fixed $\Lambda$ the exponential is uniformly bounded in $n$.
This proves convergence of the entire sequence with strongly varying initial
states.  The argument uses a finite sum and does not require the operators
$B_\ell$ to commute.

\smallskip
\noindent\emph{Step 3: removal of the cutoff and product closure.}
The triangle inequality gives
\[
 \|X_n-X\|_{C_t\mathcal H_\eps}
 \le\|X_n-X_n^\Lambda\|_{C_t\mathcal H_\eps}
     +\|X_n^\Lambda-X^\Lambda\|_{C_t\mathcal H_\eps}
     +\|X^\Lambda-X\|_{C_t\mathcal H_\eps}.
\]
First let $n\to\infty$ with $\Lambda$ fixed; then let $\Lambda\to\infty$
using \eqref{eq:cutoff-stability}.  This proves
\eqref{eq:maxwell-strong} with no cutoff-dependent constant entering the
last limit.

It remains to identify the current and Lorentz force.  The energy estimate
gives a uniform $L^2_{t,x}$ bound for $j_n$, and
\[
 v_nb_n-vb=v_n(b_n-b)+(v_n-v)b.
\]
The first term tends to zero strongly in $L^2_{t,x}$ because
\[
 \|v_n(b_n-b)\|_{L^2_{t,x}}
 \le\|v_n\|_{L^2_tL^\infty_x}
      \|b_n-b\|_{L^\infty_tL^2_x}.
\]
To treat the second term, let $\Phi\in L^2_{t,x}$.  The 1D
embedding $H^1(\T)\hookrightarrow L^\infty(\T)$ implies by duality that
$L^1(\T)\hookrightarrow H^{-1}(\T)$, and therefore
\[
 \|b(t)\Phi(t)\|_{H^{-1}_x}
 \le C\|b(t)\Phi(t)\|_{L^1_x}
 \le C\|b(t)\|_2\|\Phi(t)\|_2.
\]
It follows that
\[
 \|b\Phi\|_{L^2_tH^{-1}_x}
 \le C\|b\|_{L^\infty_tL^2_x}\|\Phi\|_{L^2_{t,x}}.
\]
Weak convergence of $v_n$ in $L^2_tH^1_x$ now gives
\[
 \int_0^T\!\!\int_\T(v_n-v)b\Phi\,dx\,dt\longrightarrow0.
\]
Thus $v_nb_n\rightharpoonup vb$ in $L^2_{t,x}$, and Ohm's law yields
\[
 j_n=\sigma(E_n+v_nb_n)\rightharpoonup\sigma(E+vb)=j
 \quad\text{in }L^2_{t,x}.
\]
Finally, for every $\Phi\in C_c^\infty((0,T)\times\T)$,
\[
 \int(j_nb_n-jb)\Phi
 =\int j_n(b_n-b)\Phi+\int(j_n-j)b\Phi.
\]
The first term tends to zero because $j_n$ is bounded in $L^2_{t,x}$ and
$b_n-b\to0$ strongly in $L^2_{t,x}$; the second tends to zero by the weak
convergence of $j_n$, since $b\Phi\in L^2_{t,x}$.  This proves
\eqref{eq:lorentz-stable}.
\end{proof}

\section{The global finite-energy construction}
\label{sec:application}

We prove \cref{thm:main} by inserting \cref{thm:maxwell} into the standard
artificial-viscosity and artificial-pressure construction for compressible
Navier--Stokes equations
\cite{Lions1998,FeireislNovotnyPetzeltova2001}.  The electromagnetic closure,
density gain, and pressure identification are interdependent parts of this
construction.  We give the coupling-specific details for the exact Maxwell evolution
at the Galerkin level, electromagnetic closure in each fluid limit, the
Lorentz force against moving density primitives, the uniform density gain,
and elimination of the pressure defect.  Routine fluid compactness,
renormalization, and initial-trace steps are recalled only to fix notation and
are otherwise taken from the cited construction.

\subsection{Approximation, energy, and the Galerkin limit}
We let 
\begin{align}
 H_\eta(z)&=\frac{a}{\gamma-1}z^\gamma+\eta z^2
             +\frac{\eta}{\beta-1}z^\beta,                 \label{eq:Heta}\\
 p_\eta(z)&=zH_\eta'(z)-H_\eta(z)
 =az^\gamma+\eta z^2+\eta z^\beta,                      \label{eq:peta}
\end{align}
where 
\(
 \beta>\max\{4,\gamma\}, \eta>0,\,\delta>0.
\)
For each $\eta$, choose smooth electromagnetic initial data
$(E_{0,\eta},b_{0,\eta})$ converging strongly to $(E_0,b_0)$ in
$L^2(\T)^2$ as $\eta\to0$.  The same pair is used at every $N$- and
$\delta$-level with $\eta$ fixed.  Thus there is no hidden electromagnetic
projection in any of the approximation limits.
We use
\begin{align}
 \rho_t+(\rho u)_x&=\delta\rho_{xx},                       \label{eq:reg-mass}\\
 (\rho u)_t+(\rho u^2)_x+p_\eta(\rho)_x-\mu u_{xx}
 +\delta\rho_xu_x&=-jb,                                   \label{eq:reg-mom}\\
 \eps E_t-b_x&=-j,                                        \label{eq:reg-ampere}\\
 b_t-E_x&=0,                                               \label{eq:reg-faraday}\\
 j&=\sigma(E+ub).                                         \label{eq:reg-ohm}
\end{align}
Thus neither an electromagnetic viscosity nor a Maxwell Galerkin truncation
is introduced.  At the semi-Galerkin level, $u_N$ belongs to the real Fourier
space
\[
 X_N=\operatorname{span}_{\mathbb R}
 \{1,\cos(2\pi kx),\sin(2\pi kx):1\le k\le N\},
\]
the continuity equation is solved as a scalar parabolic equation, and the
complete subsystem
\begin{equation}
 \eps(E_N)_t-(b_N)_x=-j_N,\qquad
 (b_N)_t-(E_N)_x=0,\qquad
 j_N=\sigma(E_N+u_Nb_N)                                  \label{eq:N-Maxwell-system}
\end{equation}
is solved exactly.  Only the momentum equation is projected onto $X_N$.
The following proposition records this coupling and the associated
continuation estimate.

\begin{proposition}[Semi-Galerkin construction with exact Maxwell evolution]
\label{prop:semi-galerkin}
Fix $N\in\mathbb N$ and $\delta,\eta>0$.  Let
$\rho_{0,N}\in C^\infty(\T)$ be strictly positive, let
$u_{0,N}\in X_N$, and take smooth periodic electromagnetic data
$(E_{0,N},b_{0,N})$.  Then the semi-Galerkin problem consisting of
\eqref{eq:reg-mass}, the exact Maxwell--Ohm system
\eqref{eq:N-Maxwell-system}, and
\begin{align}
 \frac{d}{dt}\int_\T\rho_Nu_N\varphi\,dx
 ={}&\int_\T(\rho_Nu_N^2+p_\eta(\rho_N))\varphi_x\,dx
      -\mu\int_\T(u_N)_x\varphi_x\,dx                    \label{eq:N-momentum}\\
 &-\delta\int_\T(\rho_N)_x(u_N)_x\varphi\,dx
      -\int_\T j_Nb_N\varphi\,dx                         \nonumber
\end{align}
for every $\varphi\in X_N$, admits a global solution.  Moreover,
$\rho_N(t,x)>0$, and for every $t\ge0$,
\begin{align}
 &\int_\T\left[
 \frac12\rho_Nu_N^2+H_\eta(\rho_N)
 +\frac\eps2E_N^2+\frac12b_N^2\right](t)\,dx              \label{eq:reg-energy}\\
 &\quad+\int_0^t\!\!\int_\T\left[
 \mu|(u_N)_x|^2+\delta H_\eta''(\rho_N)|(\rho_N)_x|^2
 +\frac1\sigma|j_N|^2\right]dx\,ds                       \nonumber\\
 &=\int_\T\left[
 \frac12\rho_{0,N}u_{0,N}^2+H_\eta(\rho_{0,N})
 +\frac\eps2E_{0,N}^2+\frac12b_{0,N}^2\right]dx.          \nonumber
\end{align}
In particular, the electromagnetic component of the approximation is the
full Maxwell--Ohm energy solution, not a finite-dimensional truncation.
\end{proposition}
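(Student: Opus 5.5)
We would prove \cref{prop:semi-galerkin} by a fixed-point argument on a short time interval, followed by continuation driven by the energy identity \eqref{eq:reg-energy}. The unknown is the Galerkin velocity $u_N\in C([0,\tau];X_N)$. All norms on $X_N$ are equivalent, so $u_N$ lies in $C_tW^{k,\infty}_x$ for every $k$.

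Given $u_N$, the pieces are constructed as follows.
\begin{itemize}
\item We solve \eqref{eq:reg-mass} as a linear parabolic equation with smooth coefficients. This gives a unique smooth $\rho_N$. The maximum principle gives
\[
\min\rho_{0,N}\,e^{-\int_0^t\|(u_N)_x\|_\infty}\le\rho_N\le\max\rho_{0,N}\,e^{\int_0^t\|(u_N)_x\|_\infty},
\]
so in particular $\rho_N>0$.
\item We solve the exact Maxwell--Ohm system \eqref{eq:N-Maxwell-system} with coefficient $v=u_N$ by \cref{lem:maxwell-energy}. This gives $(E_N,b_N)\in C_tL^2_x$ and, by \eqref{eq:maxwell-Lip}, Lipschitz dependence on $u_N$ in $L^2_tL^\infty_x$. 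With $u_N$ smooth in $x$ and smooth data, a standard commutator/energy estimate for the symmetric hyperbolic system shows the fields stay in $C_tH^s_x$. This is not needed for the fixed point, but it justifies the energy calculation.
\item The momentum equation \eqref{eq:N-momentum} reads $\frac{d}{dt}(\mathcal M[\rho_N]u_N)=\mathcal N(\rho_N,u_N,j_N b_N)$, where $\mathcal M[\rho]$ is the mass matrix $\int\rho e_\ell e_m$ on $X_N$. Since $\rho_N$ has a positive lower bound, $\mathcal M[\rho_N]$ is invertible with inverse bounded by $(\min\rho_N)^{-1}$.
\item The forcing $\int j_Nb_N\varphi$ is bounded by $\|j_N\|_2\|b_N\|_2\|\varphi\|_\infty$, hence lies in $L^2(0,\tau)$ and is Lipschitz in $u_N$.
\end{itemize}
Writing the momentum equation in integrated form $u_N(t)=\mathcal M[\rho_N(t)]^{-1}\bigl(\mathcal M[\rho_{0,N}]u_{0,N}+\int_0^t\mathcal N\bigr)$ gives a map on a ball of $C([0,\tau];X_N)$. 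This map is a contraction for small $\tau$, because of the Lipschitz dependence of $\rho_N$ (a parabolic estimate), of $(E_N,b_N)$ (\eqref{eq:maxwell-Lip}), and of all finite-dimensional quantities. This yields a local solution.

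Next we derive \eqref{eq:reg-energy}. We test \eqref{eq:N-momentum} with $\varphi=u_N\in X_N$, which is admissible, and use \eqref{eq:reg-mass} to rewrite the kinetic term. The term $\delta\rho_xu_x u$ is exactly what cancels the defect coming from $\delta\rho_{xx}$ in $\frac12\rho u^2$. Multiplying \eqref{eq:reg-mass} by $H_\eta'(\rho_N)$ gives the pressure work $\int p_\eta(\rho_N)(u_N)_x$ plus the dissipation $\delta H_\eta''|\rho_x|^2$. The electromagnetic energy identity follows from the Galerkin justification in \cref{lem:maxwell-energy}, or directly from the $H^s$ regularity: it gives $-\int j_NE_N$. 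Adding the pieces, the coupling terms combine to $-\int j_N(u_Nb_N+E_N)=-\sigma^{-1}\int|j_N|^2$. This yields \eqref{eq:reg-energy} on the existence interval.

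The main obstacle is continuation, i.e.\ ruling out blow-up of the contraction constants. Those constants depend on $\sup_t|u_N|_{X_N}$ and on $(\min\rho_N)^{-1}$. From \eqref{eq:reg-energy}, $\int_0^t\|(u_N)_x\|_2^2$ is bounded, so $\int_0^t\|(u_N)_x\|_\infty\le C_N\sqrt t$ by equivalence of norms on $X_N$. The maximum-principle bounds then give two-sided bounds on $\rho_N$ on every bounded interval. With $\rho_N\ge\underline\rho(T)>0$, the kinetic energy bound $\int\rho_Nu_N^2\le C$ controls $|u_N|_{X_N}$ uniformly. The electromagnetic fields stay bounded by \eqref{eq:maxwell-sup-bound}. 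Hence the local existence time depends only on quantities bounded on $[0,T]$, and iterating gives a global solution.
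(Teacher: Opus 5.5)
Your proposal is correct and follows essentially the same route as the paper. It uses a short-time fixed point on the Galerkin velocity, built from the parabolic continuity map, the exact Maxwell--Ohm solution map of \cref{lem:maxwell-energy} with its stability estimate \eqref{eq:maxwell-Lip}, and the mass-matrix ODE. It then uses the same energy bookkeeping (testing with $u_N$ and $H_\eta'(\rho_N)$, plus the Maxwell energy identity) and continuation via the maximum principle. The only minor difference is that you bound $|u_N|_{X_N}$ through the lower density bound and the kinetic energy rather than through the weighted Poincar\'e estimate, which is equally valid at fixed $N$.
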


\begin{proof}
We indicate the coupling argument and give the complete energy and
continuation calculations.  The remaining local fixed-point construction is
the standard semi-Galerkin procedure for compressible Navier--Stokes
approximations; see \cite{Lions1998,FeireislNovotnyPetzeltova2001}.

For a prescribed path $u_N\in C([0,\tau];X_N)$, the first equation in
\eqref{eq:reg-mass} is uniformly parabolic and has a unique positive smooth
solution.  With the same prescribed coefficient, \cref{lem:maxwell-energy}
gives the unique solution of the full, unprojected Maxwell--Ohm system
\eqref{eq:N-Maxwell-system}.  Its dependence on $u_N$ is continuous by
\eqref{eq:maxwell-Lip}.  Finally, if
$\{\phi_1,\ldots,\phi_{d_N}\}$ is a real basis of $X_N$, the matrix
\[
 \mathsf M_N[\rho_N]_{k\ell}
 =\int_\T\rho_N\phi_k\phi_\ell\,dx
\]
is positive definite.  Equation \eqref{eq:N-momentum} is therefore an
ordinary differential system for the coefficients of $u_N$.  The usual
short-time fixed-point argument closes these three solution maps.  Notice
that no approximation of $(E_N,b_N)$ is used in this construction.

We next derive the a priori identity which globalizes the solution.  Testing
the parabolic continuity equation by $H_\eta'(\rho_N)$ and using
$p_\eta(z)=zH_\eta'(z)-H_\eta(z)$ gives
\begin{equation}
 \frac{d}{dt}\int_\T H_\eta(\rho_N)\,dx
 +\int_\T p_\eta(\rho_N)(u_N)_x\,dx
 +\delta\int_\T H_\eta''(\rho_N)|(\rho_N)_x|^2\,dx=0.
                                                               \label{eq:N-internal}
\end{equation}
Since $u_N$ is an admissible test in \eqref{eq:N-momentum}, the kinetic
energy calculation is exact.  In fact,
\[
 \int_\T u_N\bigl[(\rho_Nu_N)_t+(\rho_Nu_N^2)_x\bigr]dx
 =\frac{d}{dt}\int_\T\frac12\rho_Nu_N^2\,dx
  -\delta\int_\T(\rho_N)_xu_N(u_N)_x\,dx.
\]
The last term is cancelled precisely by the correction
$\delta(\rho_N)_x(u_N)_x$ in \eqref{eq:reg-mom}.  Consequently,
\begin{equation}
 \frac{d}{dt}\int_\T\frac12\rho_Nu_N^2\,dx
 -\int_\T p_\eta(\rho_N)(u_N)_x\,dx
 +\mu\int_\T|(u_N)_x|^2\,dx
 =-\int_\T j_Nb_Nu_N\,dx.                                \label{eq:N-kinetic}
\end{equation}
On the other hand, multiplying the two exact Maxwell equations by $E_N$ and
$b_N$, respectively, and using periodicity yields
\begin{equation}
 \frac{d}{dt}\int_\T\left(\frac\eps2E_N^2+\frac12b_N^2\right)dx
 =-\int_\T j_NE_N\,dx.                                   \label{eq:N-EM-energy}
\end{equation}
Adding \eqref{eq:N-internal}, \eqref{eq:N-kinetic}, and
\eqref{eq:N-EM-energy}, and then using
$j_N=\sigma(E_N+u_Nb_N)$, gives
\[
 -j_N(E_N+u_Nb_N)=-\frac1\sigma|j_N|^2.
\]
Integration in time proves \eqref{eq:reg-energy}.  This calculation also
shows why keeping the Maxwell subsystem exact is compatible with the fluid
Galerkin projection: the test $u_N$ belongs to $X_N$, whereas the Maxwell
energy identity requires no projected test.

For completeness, the standard continuation criterion is also unchanged.
The energy identity and \cref{lem:weighted-poincare} bound
$u_N$ in $L^2_tH^1_x$, while finite dimensionality gives
$\int_0^T\|(u_N)_x\|_\infty\,dt<\infty$.  The parabolic maximum principle
therefore keeps $\rho_N$ strictly positive and bounded on each finite
interval, so the Galerkin mass matrix remains invertible.  All terms in the
finite-dimensional momentum equation are integrable; for the only new one,
\[
 \int_0^T\!\!\int_\T|j_Nb_N|\,dx\,dt
 \le T^{1/2}\|j_N\|_{L^2_{t,x}}
       \|b_N\|_{L^\infty_tL^2_x}<\infty.
\]
The usual Galerkin continuation argument
\cite{Lions1998,FeireislNovotnyPetzeltova2001} now gives the global solution.
\end{proof}

The next lemma is the interface between the compactness theorem and all three
fluid approximation limits.  It will be used with the generic index $n$
replaced successively by $N$, $\delta$, and $\eta$.

\begin{lemma}[Electromagnetic closure along fluid limits]
\label{lem:maxwell-closure}
Let $u_n\rightharpoonup u$ in $L^2(0,T;H^1(\T))$, and suppose
\[
 (E_{0,n},b_{0,n})\longrightarrow(E_0,b_0)
 \quad\text{strongly in }L^2(\T)^2.
\]
For every $n$, let $(E_n,b_n,j_n)$ solve the exact Maxwell--Ohm subsystem
\[
 \eps(E_n)_t-(b_n)_x=-j_n,\qquad
 (b_n)_t-(E_n)_x=0,\qquad
 j_n=\sigma(E_n+u_nb_n).
\]
Then, for the energy solution $(E,b)$ with coefficient $u$ and initial datum
$(E_0,b_0)$,
\begin{align}
 &(E_n,b_n)\longrightarrow(E,b)
     &&\text{strongly in }C([0,T];L^2(\T)^2),               \label{eq:closure-fields}\\
 &u_nb_n\rightharpoonup ub,\qquad
   j_n\rightharpoonup j:=\sigma(E+ub)
     &&\text{weakly in }L^2((0,T)\times\T),                \label{eq:closure-current}\\
 &j_nb_n\longrightarrow jb
     &&\text{in }\mathcal D'((0,T)\times\T).              \label{eq:closure-force}
\end{align}
In particular, the limit satisfies the complete Maxwell--Ohm subsystem,
\[
 \eps E_t-b_x=-j,\qquad b_t-E_x=0,
 \qquad j=\sigma(E+ub),
\]
where the first two equations hold in distributions and Ohm's law holds in
$L^2_{t,x}$.
\end{lemma}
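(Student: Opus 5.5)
This lemma is essentially a restatement of \cref{thm:maxwell} with $v_n=u_n$, so the plan is to verify its hypotheses and then pass to the limit in the distributional equations.

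\emph{Step 1: hypotheses.} The hypotheses match those of \cref{thm:maxwell} directly: $u_n\rightharpoonup u$ in $L^2(0,T;H^1(\T))$, and the initial fields converge strongly in $L^2(\T)^2$. The one point to check is that each $(E_n,b_n)$ is the energy (mild) solution from \cref{lem:maxwell-energy}, and not merely some distributional solution. At the semi-Galerkin level this holds by construction in \cref{prop:semi-galerkin}. At the $\delta$- and $\eta$-levels the fields are themselves limits produced by this same lemma, so they are energy solutions again. For a general $C_tL^2_x$ distributional solution, the uniqueness part of \cref{lem:maxwell-energy} (via Ball's theorem) identifies it with the energy solution. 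I will therefore state the lemma for energy solutions, or note that distributional solutions in $C_tL^2_x$ coincide with them.

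\emph{Step 2: limits.} With the hypotheses verified, \eqref{eq:maxwell-strong} gives \eqref{eq:closure-fields}. The weak convergence $u_nb_n\rightharpoonup ub$ comes from the argument in Step~3 of the proof of \cref{thm:maxwell}. Split
\[
u_nb_n-ub=u_n(b_n-b)+(u_n-u)b .
\]
The first piece converges strongly to zero by the $L^2_tL^\infty_x$ bound on $u_n$ and the strong convergence of $b_n$. The second converges weakly to zero by the $L^1\hookrightarrow H^{-1}$ duality. Adding $E_n\to E$ strongly then gives $j_n\rightharpoonup j$, which is \eqref{eq:closure-current}. For \eqref{eq:closure-force}, write
\[
j_nb_n-jb=j_n(b_n-b)+(j_n-j)b ,
\]
which is the weak--strong splitting already carried out in that proof.

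\emph{Step 3: limit system.} Pass to the limit in the distributional Maxwell equations. These are linear in $(E_n,b_n,j_n)$ with smooth test functions, so the strong convergence of the fields in $C_tL^2_x$ and the weak convergence of $j_n$ in $L^2$ suffice, including the initial terms, since the data converge strongly. Ohm's law $j=\sigma(E+ub)$ holds in $L^2_{t,x}$ by the definition of the weak limit, and \cref{lem:maxwell-energy} already gives $E+ub\in L^2_{t,x}$.

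The main obstacle is the bookkeeping in Step~1: ensuring that the approximate fields at every level are exactly the energy solutions to which \cref{thm:maxwell} applies, with no hidden projection of the Maxwell system. The paper's choice to keep Maxwell exact at each level, with fixed smooth data per $\eta$, is what makes this work. Everything else is a direct citation.
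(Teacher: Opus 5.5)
Your proposal is correct and follows the paper's proof. Both reduce \eqref{eq:closure-fields} and \eqref{eq:closure-force} to \cref{thm:maxwell} with $v_n=u_n$, reuse the same weak--strong splittings for $u_nb_n$, $j_n$ and $j_nb_n$, and pass to the limit in the linear Maxwell equations; your Step~1 check that each $(E_n,b_n)$ is the energy solution, via the distributional-to-mild identification in \cref{lem:maxwell-energy}, is handled correctly and is a point the paper leaves implicit.
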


\begin{proof}
The strong convergence \eqref{eq:closure-fields} and the Lorentz-force
convergence \eqref{eq:closure-force} follow from \cref{thm:maxwell}.  The
product convergence $u_nb_n\rightharpoonup ub$ in $L^2_{t,x}$ is the final
product estimate in the proof of that theorem; combining it with
$E_n\to E$ in $C_tL^2_x$ identifies the weak limit of
$j_n=\sigma(E_n+u_nb_n)$ and proves \eqref{eq:closure-current}.  Passing to
the limit in the two linear Maxwell equations gives the displayed
Maxwell--Ohm subsystem.  Uniqueness in \cref{lem:maxwell-energy} identifies
the limit without any further subsequence.
\end{proof}

Distributional convergence of $j_nb_n$ by itself does not allow an
approximation-dependent test function.  The next lemma supplies the precise
moving-test statement needed in the density-primitive identities.

\begin{lemma}[Lorentz force against moving density primitives]
\label{lem:moving-lorentz}
Assume
\[
 u_n\rightharpoonup u\quad\text{in }L^2(0,T;H^1(\T)),\qquad
 (E_n,b_n)\to(E,b)\quad\text{in }C([0,T];L^2(\T)^2),
\]
and let $w_n\to w$ in $C([0,T]\times\T)$.  Let
\(
 j_n=\sigma(E_n+u_nb_n),\,\; j=\sigma(E+ub).
\)
Then
\begin{equation}
\int_\T j_nb_nw_n\,dx
 \longrightarrow
 \int_\T jbw\,dx
 \quad\text{in }\mathcal D'(0,T).
\label{eq:lorentz-moving-test}
\end{equation}

\end{lemma}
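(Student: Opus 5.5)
The plan is to test \eqref{eq:lorentz-moving-test} against an arbitrary $\theta\in C_c^\infty(0,T)$ and split the moving test off from the fixed one:
\[
 \int_0^T\!\!\int_\T\theta\,(j_nb_nw_n-jbw)
 =\int_0^T\!\!\int_\T\theta\,j_nb_n(w_n-w)
 +\int_0^T\!\!\int_\T\theta\,(j_nb_n-jb)\,w .
\]
The first term is a product of a uniformly $L^1$-bounded sequence with a uniformly small function. The second is a Lorentz-force limit against a fixed but only continuous test.

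\emph{Uniform bounds.} First I record that $j_n$ is bounded in $L^2_{t,x}$. Indeed, $E_n$ is bounded in $C_tL^2_x$ by the assumed strong convergence. Moreover, $u_n$ is bounded in $L^2_tH^1_x\hookrightarrow L^2_tL^\infty_x$, so
\[
\|u_nb_n\|_{L^2_{t,x}}\le \|u_n\|_{L^2_tL^\infty_x}\|b_n\|_{L^\infty_tL^2_x}
\]
is bounded. Hence $\|j_nb_n\|_{L^1_{t,x}}\le \|j_n\|_{L^2_{t,x}}\|b_n\|_{L^2_{t,x}}\le C$. The first term is therefore bounded by
\[
C\|\theta\|_\infty\|w_n-w\|_{C([0,T]\times\T)}\to0 .
\]

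\emph{Fixed test.} For the second term I will not invoke distributional convergence with a smooth test, since $\theta w$ is only continuous. Instead I repeat the splitting from the proof of \cref{thm:maxwell}:
\[
 j_nb_n-jb=j_n(b_n-b)+(j_n-j)b .
\]
The piece $j_n(b_n-b)\theta w$ is bounded by $\|j_n\|_{L^2}\|b_n-b\|_{L^2_{t,x}}\|\theta w\|_\infty\to0$, using the strong convergence of $b_n$ in $C_tL^2_x$.

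For the piece $(j_n-j)b\theta w$, I need $j_n\rightharpoonup j$ weakly in $L^2_{t,x}$. This follows exactly as in Step 3 of the proof of \cref{thm:maxwell}, whose argument used only the weak $L^2_tH^1_x$ convergence of $u_n$ and the strong $C_tL^2_x$ convergence of $b_n$, both of which are hypotheses here. Concretely, $E_n\to E$ strongly, $u_n(b_n-b)\to0$ strongly in $L^2_{t,x}$, and $(u_n-u)b\rightharpoonup0$ by the duality $L^1_x\hookrightarrow H^{-1}_x$. Since $b\theta w\in L^2_{t,x}$ (as $b\in L^\infty_tL^2_x$ and $\theta w$ is bounded), this piece tends to zero.

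\emph{Main obstacle.} The only delicate point is that the test $w_n$ depends on $n$ and the limit test $w$ is merely continuous, not smooth. The argument above handles this because $j_nb_n$ is bounded in $L^1$, not merely convergent in $\mathcal D'$, and because weak $L^2$ convergence of $j_n$ accepts any $L^2$ test such as $b\theta w$. No extra compactness of $w_n$ beyond the uniform convergence assumed in the statement should be needed.
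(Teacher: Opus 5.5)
Your proposal is correct and follows essentially the same route as the paper. Both arguments rest on the same three inputs: weak $L^2_{t,x}$ convergence of $j_n$ (obtained from the Step~3 product argument), strong convergence of $b_n$, and uniform convergence of $w_n$. The only difference is cosmetic: the paper packages $b_nw_n\to bw$ strongly in $L^2_{t,x}$ into a single weak--strong pairing, whereas you first peel off $w_n-w$ using the uniform $L^1$ bound on $j_nb_n$ and then pass to the limit against the fixed test $w$.
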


\begin{proof}
The current product argument at the end of the proof of \cref{thm:maxwell}
uses only the convergence hypotheses stated here, not the Maxwell equations.
Indeed, $u_n(b_n-b)\to0$ in $L^2_{t,x}$, while, for every
$\Phi\in L^2_{t,x}$, $b\Phi\in L^2_tH^{-1}_x$.  Thus
$(u_n-u)b\rightharpoonup0$ in $L^2_{t,x}$ and $j_n\rightharpoonup j$
in $L^2_{t,x}$.  Also,
\[
 \|b_nw_n-bw\|_{L^2_{t,x}}
 \le \|w_n\|_\infty\|b_n-b\|_{L^2_{t,x}}
     +\|w_n-w\|_\infty\|b\|_{L^2_{t,x}}\longrightarrow0.
\]
For $\chi\in C_c^\infty(0,T)$, it follows that
\[
 \int\chi(j_nb_nw_n-jbw)
 =\int\chi j_n(b_nw_n-bw)+\int\chi(j_n-j)bw\longrightarrow0,
\]
by the strong $L^2$ convergence of the test product and the weak $L^2$
convergence of the current.  This proves \eqref{eq:lorentz-moving-test}.
\end{proof}

At fixed $(\delta,\eta)$, the standard Galerkin compactness argument gives,
after extraction,
\[
 \rho_N\to\rho_{\delta,\eta}\ \hbox{strongly in }L^q_{t,x}
 \ (q<\beta+1),\qquad
 u_N\rightharpoonup u_{\delta,\eta}\ \hbox{in }L^2_tH^1_x,
\]
and $(\rho_N)_x\to(\rho_{\delta,\eta})_x$ strongly in $L^2_{t,x}$.
Compactness of the projected momentum in $L^2_tH^{-1}_x$ identifies the
convective term, while using $P_N\varphi$ as test function removes the
Galerkin projection.  These are precisely the classical steps in
\cite{Lions1998,FeireislNovotnyPetzeltova2001}; the electromagnetic limit is
the only additional point.
At fixed $\eta$, the initial pair $(E_{0,\eta},b_{0,\eta})$ is independent of
$N$ and $\delta$.  Since \eqref{eq:N-Maxwell-system} is the exact
Maxwell--Ohm subsystem, \cref{lem:maxwell-closure} applies and yields
\begin{equation}
 (E_N,b_N)\to(E_{\delta,\eta},b_{\delta,\eta})
       \ \hbox{in }C_tL^2_x,\;\;\;
 j_N\rightharpoonup j_{\delta,\eta}\ \hbox{in }L^2_{t,x},\;\;\;
 j_Nb_N\to j_{\delta,\eta}b_{\delta,\eta}\ \hbox{in }\D'.
                                                                  \label{eq:N-Maxwell}
\end{equation}
Moreover,
\[
 u_Nb_N\rightharpoonup
 u_{\delta,\eta}b_{\delta,\eta}\quad\text{in }L^2_{t,x},
 \qquad
 j_{\delta,\eta}
 =\sigma(E_{\delta,\eta}+u_{\delta,\eta}b_{\delta,\eta}).
\]
Consequently,
\[
 \eps(E_{\delta,\eta})_t-(b_{\delta,\eta})_x=-j_{\delta,\eta},
 \qquad
 (b_{\delta,\eta})_t-(E_{\delta,\eta})_x=0
\]
in distributions.  Thus the entire Maxwell--Ohm subsystem, including its
initial data, is identified at the Galerkin limit.
The remaining terms pass exactly as in the standard fluid construction.
For the Lorentz projection error one uses the uniform $L^1_{t,x}$ bound on
$j_Nb_N$ and $P_N\varphi\to\varphi$ uniformly.  Hence the Galerkin limit
solves \eqref{eq:reg-mass}--\eqref{eq:reg-ohm} at fixed $(\delta,\eta)$.

\subsection{Removal of the density viscosity}

Fix $\eta>0$ and denote the Galerkin limits just constructed by
$(\rho_\delta,u_\delta,E_\delta,b_\delta,j_\delta)$, suppressing $\eta$ from
the notation.  Their electromagnetic initial data are
$(E_{0,\eta},b_{0,\eta})$, independently of $\delta$.

Let $w_\delta=\partial_x^{-1}(\rho_\delta-M)$.  The diffusive continuity
equation gives
\[
 (w_\delta)_t=-\rho_\delta u_\delta
 +\meanx{\rho_\delta u_\delta}+\delta(\rho_\delta)_x.
\]
Testing \eqref{eq:reg-mom} by $w_\delta$ gives the identity of
\cref{lem:primitive-identity}, with $p$ replaced by $p_\eta$, up to
\[
 \mathcal R_\delta
 =-\delta\int_0^T\!\!\int_\T
      \rho_\delta u_\delta(\rho_\delta)_x
   +\delta\int_0^T\!\!\int_\T
      (\rho_\delta)_x(u_\delta)_xw_\delta .
\]
Because $\sqrt{\delta\eta}(\rho_\delta)_x$ is bounded in $L^2$ and
$\|w_\delta\|_\infty\le2M$, we have 
\(
 |\mathcal R_\delta|\le C_{\eta,T}\sqrt\delta.
\)
The primitive estimate therefore yields, uniformly for $0<\delta\le1$,
\begin{equation}
 \int_0^T\!\!\int_\T
 \bigl(\rho_\delta^{\gamma+1}+\eta\rho_\delta^3
       +\eta\rho_\delta^{\beta+1}\bigr)
 \,dx\,dt\le C_{\eta,T}.                                  \label{eq:delta-gain}
\end{equation}
Moreover,
\[
 \delta\rho_{\delta,x}\to0\quad\hbox{in }L^2,\;\;
 \delta\rho_{\delta,x}u_{\delta,x}\to0\quad\hbox{in }L^1.
\]
The full momentum equation now makes $m_\delta=\rho_\delta u_\delta$
compact in $L^2_tH^{-1}_x$, and convection passes as above.
The energy identity and \cref{lem:weighted-poincare} also give, after
extraction,
\[
 u_\delta\rightharpoonup u_\eta
 \quad\text{in }L^2(0,T;H^1(\T)).
\]
The electromagnetic initial data are fixed at this limit.  Hence
\cref{lem:maxwell-closure} with $n=\delta$ gives
\begin{align}
 &(E_\delta,b_\delta)\to(E_\eta,b_\eta)
       &&\text{strongly in }C([0,T];L^2(\T)^2),            \label{eq:delta-Maxwell}\\
 &u_\delta b_\delta\rightharpoonup u_\eta b_\eta
       &&\text{weakly in }L^2_{t,x},                       \nonumber\\
 &j_\delta=\sigma(E_\delta+u_\delta b_\delta)
       \rightharpoonup j_\eta=\sigma(E_\eta+u_\eta b_\eta)
       &&\text{weakly in }L^2_{t,x},                       \nonumber\\
 &j_\delta b_\delta\longrightarrow j_\eta b_\eta
       &&\text{in }\D'((0,T)\times\T).                     \nonumber
\end{align}
Passing to the limit in the two linear equations also gives
\[
 \eps(E_\eta)_t-(b_\eta)_x=-j_\eta,\qquad
 (b_\eta)_t-(E_\eta)_x=0
\]
in distributions.  Thus the $\delta\to0$ limit retains the complete,
unregularized Maxwell--Ohm subsystem.
Moreover, let $w_\eta=\partial_x^{-1}(\rho_\eta-M)$.  The proof of
\cref{lem:primitive} applies to $w_\delta$ with the additional term
$\delta(\rho_\delta)_x$ in $(w_\delta)_t$.  This term tends to zero in
$L^2_{t,x}$ because
\[
\|\delta(\rho_\delta)_x\|_{L^2_{t,x}}
 \le \sqrt{\frac{\delta}{\eta}}\,
     \|\sqrt{\delta\eta}(\rho_\delta)_x\|_{L^2_{t,x}}
 \longrightarrow0.
\]
Let $q=\min\{\gamma,2\}>1$.  The nondiffusive part of
$(w_\delta)_t$ is bounded in $L^2(0,T;L^\gamma(\T))$, while the additional
term is bounded in $L^2_{t,x}$ and tends to zero there.  Since $|\T|=1$,
$(w_\delta)_t$ is therefore bounded in $L^2(0,T;L^q(\T))$, and
\[
 \|w_\delta(t)-w_\delta(s)\|_{L^q_x}
 \le C|t-s|^{1/2}.
\]
Together with the uniform $L^\infty(0,T;W^{1,\gamma}(\T))$ bound, the same
1D Gagliardo--Nirenberg and Arzel\`a--Ascoli argument as in
\cref{lem:primitive} gives
\[
 w_\delta\to w_\eta\quad\text{in }C([0,T]\times\T).
\]
All hypotheses of \cref{lem:moving-lorentz} now follow from
\eqref{eq:delta-Maxwell} and the weak convergence of $u_\delta$.  Hence
\begin{equation}
 \int_\T j_\delta b_\delta w_\delta\,dx
 \longrightarrow
 \int_\T j_\eta b_\eta w_\eta\,dx
 \quad\text{in }\mathcal D'(0,T).                          \label{eq:delta-moving-lorentz}
\end{equation}
Thus the Lorentz coupling passes in the time-localized primitive identity
without any appeal to $L^1$ equi-integrability.

Once \eqref{eq:delta-moving-lorentz} identifies the only new term, pressure
identification is the standard effective-flux argument.  Subtracting the
approximate and limiting primitive identities gives the usual nonnegative
monotonicity defect, while the diffusive and limiting
$\rho\log\rho$ identities give the opposite sign.  Hence the defect vanishes,
\[
 \rho_\delta\to\rho_\eta\quad\hbox{a.e. and strongly in }L^q_{t,x}
 \quad(q<\beta+1),
\]
and $p_\eta(\rho_\delta)\to p_\eta(\rho_\eta)$ in $L^1$.  We refer to
\cite{Lions1998,FeireislNovotnyPetzeltova2001} for this unchanged final
step.  Thus the limit solves the system with pressure $p_\eta$ and without
density viscosity.

The estimate \eqref{eq:delta-gain} has a constant depending on $\eta$, so it
cannot be used directly in the limit $\eta\to0$.  Once the density viscosity
has been removed, however, the primitive identity has no diffusion remainder
and yields the required uniform estimate.

\begin{proposition}[Uniform density gain after removal of density viscosity]
\label{prop:eta-gain}
For the solutions $(\rho_\eta,u_\eta,E_\eta,b_\eta,j_\eta)$ constructed above,
and for every $T>0$,
\begin{equation}
 \int_0^T\!\!\int_\T
 \bigl(\rho_\eta^{\gamma+1}+\eta\rho_\eta^3
       +\eta\rho_\eta^{\beta+1}\bigr)\,dx\,dt\le C_T,     \label{eq:eta-gain}
\end{equation}
where $C_T$ is independent of $\eta\in(0,1]$.
\end{proposition}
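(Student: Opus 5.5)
The plan is to derive the density-primitive identity of \cref{lem:primitive-identity} directly at the $\eta$-level, with $p$ replaced by $p_\eta$, and then estimate its right-hand side using only $\eta$-independent quantities. The natural source of such bounds is the energy inequality inherited from \eqref{eq:reg-energy} by weak lower semicontinuity through the $N$ and $\delta$ limits. This inequality bounds $\sup_t\int(\frac12\rho_\eta u_\eta^2+H_\eta(\rho_\eta)+\frac\eps2E_\eta^2+\frac12b_\eta^2)$ and $\int_0^T\!\int(\mu|u_{\eta,x}|^2+\sigma^{-1}|j_\eta|^2)$. Its right-hand side is uniform in $\eta\in(0,1]$ provided the regularized initial data are chosen as usual, namely $\rho_{0,\eta}$ with $\eta\int\rho_{0,\eta}^\beta\to0$, mass $M_\eta\to M$, and $(E_{0,\eta},b_{0,\eta})\to(E_0,b_0)$ in $L^2$. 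Together with \eqref{lem:weighted-poincare}, this bounds $u_\eta$ in $L^2_tH^1_x$ uniformly.

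The identity itself cannot be obtained by testing with $w_\eta$ at the $\eta$-level, since we have only weak solutions. Instead, I would pass to the limit $\delta\to0$ in the already available $\delta$-level identity; this is cleaner. At level $\delta$, the identity holds with remainder $\mathcal R_\delta$, $|\mathcal R_\delta|\le C_{\eta,T}\sqrt\delta\to0$. Each term then converges, using the convergences established in the previous subsection:
\begin{itemize}
\item The pressure term converges because $\rho_\delta\to\rho_\eta$ strongly in $L^q$, $q<\beta+1$, combined with \eqref{eq:delta-gain} for equi-integrability of $p_\eta(\rho_\delta)\rho_\delta$; alternatively, lower semicontinuity on the left side suffices for an upper bound.
\item The term $\mu\int u_x(\rho-M)$ converges by weak-times-strong convergence.
\item The Lorentz term converges by \eqref{eq:delta-moving-lorentz}.
\item The convective terms $M\int\rho u^2-|\meanx m|^2$ converge by the standard convergence $\rho_\delta u_\delta^2\to\rho_\eta u_\eta^2$ in the fluid construction. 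Only an upper bound of the form $\le M\cdot$kinetic energy is needed anyway.
\item For the endpoint terms $[\int mw]_0^T$, I would localize in time with a cutoff $\chi(t)$ to avoid trace issues, using \eqref{eq:delta-moving-lorentz} in $\D'(0,T)$.
\end{itemize}
The result is a time-localized primitive identity for $(\rho_\eta,u_\eta,E_\eta,b_\eta,j_\eta)$ with no diffusion remainder, on $[\tau_1,\tau_2]$ for a.e. $\tau_i$.

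Given the identity, the estimate proceeds as in the proof of \eqref{eq:pressure-gain}, and everything is uniform in $\eta$. We have $\|w_\eta\|_\infty\le2M_\eta$. The endpoint terms are bounded by $2M\|m_\eta(t)\|_{L^1}\le 2M\sqrt{M}\|\sqrt{\rho_\eta}u_\eta\|_{L^2}$. The convective terms are bounded by $M$ times the kinetic energy. The Lorentz term is at most $2M\|j_\eta\|_{L^2}\|b_\eta\|_{L^2_{t,x}}$. For the viscous term, Young's inequality absorbs $\kappa\int\rho_\eta^{\gamma+1}$, leaving $\int|u_{\eta,x}|^{(\gamma+1)/\gamma}$. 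On the left, $p_\eta(\rho)(\rho-M)=a\rho^{\gamma+1}+\eta\rho^3+\eta\rho^{\beta+1}-M p_\eta(\rho)$. The negative part $Mp_\eta(\rho_\eta)$ is controlled by $C\,H_\eta(\rho_\eta)$ because $p_\eta\le \max\{\gamma-1,1,\beta-1\}H_\eta$, and $\sup_t\int H_\eta(\rho_\eta)$ is uniformly bounded by the energy. This yields \eqref{eq:eta-gain}.

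The main obstacle is the justification of the $\eta$-level identity, specifically the convergence of the nonlinear terms $p_\eta(\rho_\delta)\rho_\delta$ and $\rho_\delta u_\delta^2$ as $\delta\to0$. For the pressure term, I would first try the monotone route: by Fatou or lower semicontinuity, $\int p_\eta(\rho_\eta)\rho_\eta\le\liminf\int p_\eta(\rho_\delta)\rho_\delta$. This inequality goes in the right direction, since the $\delta$-level bound holds with every right-hand term converging or bounded uniformly in $\eta$ and $\delta$. That last point is key. If I only need a bound and not an identity, I could even skip identifying the limit: the $\delta$-level right-hand side is already bounded by $\eta$-independent energy quantities plus $C_{\eta,T}\sqrt\delta$. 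Letting $\delta\to0$ and applying weak lower semicontinuity to the left side then gives \eqref{eq:eta-gain} directly. I expect this shortcut to be the cleanest rigorous route.
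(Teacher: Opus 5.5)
Your final (shortcut) route is correct, and it differs genuinely from the paper's. The paper uses the $\delta\to0$ lower semicontinuity only to obtain an $\eta$-dependent bound, whose role is to make $p_\eta(\rho_\eta)\rho_\eta$ and $\rho_\eta^2u_\eta^2$ integrable. It then justifies the density-primitive calculation directly for the fixed-$\eta$ weak solution by approximating $w_\eta$ in space-time, obtains the exact identity \eqref{eq:eta-primitive-id}, and estimates that identity using energy quantities.

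You instead note that in the $\delta$-level identity every term other than $\mathcal R_\delta$ is controlled by the augmented energy, which is uniform in $\eta$ and $\delta$. So the $\delta$-level estimate already reads $C_T+C_{\eta,T}\sqrt\delta$ with $C_T$ independent of $\eta$. Weak lower semicontinuity of the convex functional $\rho\mapsto\int_0^T\!\int_\T(\rho^{\gamma+1}+\eta\rho^3+\eta\rho^{\beta+1})$ then gives \eqref{eq:eta-gain} as $\delta\to0$. This is shorter, and it avoids the least detailed step of the paper's proof, namely the space-time approximation of $w_\eta$ for a weak solution.

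What the paper's route buys is the identity \eqref{eq:eta-primitive-id} itself at level $\eta$. That identity is what is later passed to the limit in \cref{prop:pressure-defect}. Your argument produces only an inequality, so the identity would still have to be justified there.

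One caution about your first, term-by-term route: an $L^1$ bound on $\rho_\delta^{\beta+1}$ from \eqref{eq:delta-gain} does not give equi-integrability of $p_\eta(\rho_\delta)\rho_\delta$. That route therefore also yields only an inequality at level $\eta$, not the ``identity with no diffusion remainder'' you claim. This is harmless for the bound, since you fall back on lower semicontinuity anyway.
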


\begin{proof}
Fix $\eta>0$ and set
$w_\eta=\partial_x^{-1}(\rho_\eta-M)$.  Lower semicontinuity in the
$\delta\to0$ limit and \eqref{eq:delta-gain} first give the corresponding
bound with an $\eta$-dependent constant.  In particular,
$p_\eta(\rho_\eta)\rho_\eta\in L^1_{t,x}$.  Moreover, the artificial-pressure
energy gives $\rho_\eta\in L^\infty_tL^\beta_x$, and hence
\[
 \rho_\eta^2u_\eta^2\in L^1((0,T)\times\T)
 \qquad(\beta>4).
\]
Thus all terms in the density-primitive calculation are integrable at fixed
$\eta$.  Approximating $w_\eta$ in space-time and using
\[
 (w_\eta)_x=\rho_\eta-M,\qquad
 (w_\eta)_t=-\rho_\eta u_\eta
                 +\meanx{\rho_\eta u_\eta},
\]
justifies the calculation of \cref{lem:primitive-identity} for this weak
solution.  In particular,
\begin{align}
 \int_0^T\!\!\int_\T p_\eta(\rho_\eta)(\rho_\eta-M)
 ={}&\left[\int_\T \rho_\eta u_\eta w_\eta\,dx\right]_0^T
 +\int_0^T\left(M\int_\T\rho_\eta u_\eta^2\,dx
      -\left|\int_\T\rho_\eta u_\eta\,dx\right|^2\right)dt \notag\\
 &+\mu\int_0^T\!\!\int_\T (u_\eta)_x(\rho_\eta-M)\,dx\,dt
 +\int_0^T\!\!\int_\T j_\eta b_\eta w_\eta\,dx\,dt.     \label{eq:eta-primitive-id}
\end{align}
The two copies of $\rho_\eta^2u_\eta^2$ cancel before any estimate is made,
exactly as in \cref{lem:primitive-identity}.

The initial approximations specified in \cref{subsec:initial-data} have
uniformly bounded augmented energy.  We now estimate
\eqref{eq:eta-primitive-id} using only quantities controlled uniformly by that
energy.  Since $\|w_\eta\|_{L^\infty_{t,x}}\le2M$, the boundary
and convective terms are bounded by the mass and kinetic energy.  Also,
\[
 \int_0^T\!\!\int_\T |j_\eta b_\eta w_\eta|\,dx\,dt
 \le 2M\|j_\eta\|_{L^2_{t,x}}
          \|b_\eta\|_{L^2_tL^2_x}\le C_T.
\]
For every $\kappa>0$, Young's inequality and
$(\gamma+1)/\gamma<2$ give
\[
 \mu\int_0^T\!\!\int_\T |(u_\eta)_x|(\rho_\eta+M)\,dx\,dt
 \le \kappa\int_0^T\!\!\int_\T\rho_\eta^{\gamma+1}\,dx\,dt
      +C_{\kappa,T}\bigl(1+\|(u_\eta)_x\|_{L^2_{t,x}}^2\bigr).
\]
Finally,
\begin{align*}
 p_\eta(\rho_\eta)(\rho_\eta-M)
 ={}&a\rho_\eta^{\gamma+1}+\eta\rho_\eta^3
       +\eta\rho_\eta^{\beta+1}\\
 &-M\bigl(a\rho_\eta^\gamma+\eta\rho_\eta^2
       +\eta\rho_\eta^\beta\bigr).
\end{align*}
The last line is  uniformly bounded in \(L^1((0,T)\times\mathbb T)\).
Choosing $\kappa<a/2$ and absorbing the first term on the right proves
\eqref{eq:eta-gain} with a constant independent of $\eta$.
\end{proof}

\subsection{Removal of the artificial pressure}

Let
\(
(\rho_\eta,u_\eta,E_\eta,b_\eta,j_\eta)
\)
be the solution family at the final approximation level. The energy, \cref{lem:weighted-poincare,prop:eta-gain} give, uniformly in
$\eta$,
\begin{align}
 &\rho_\eta\ \hbox{bounded in }L^\infty_tL^\gamma_x,\qquad
 u_\eta\ \hbox{bounded in }L^2_tH^1_x,                    \label{eq:eta-basic}\\
 &E_\eta,b_\eta\ \hbox{bounded in }L^\infty_tL^2_x,\qquad
 j_\eta\ \hbox{bounded in }L^2_{t,x},                     \nonumber\\
 &\int_0^T\!\!\int_\T
 \bigl(\rho_\eta^{\gamma+1}+\eta\rho_\eta^3
 +\eta\rho_\eta^{\beta+1}\bigr)\,dx\,dt\le C_T.          \nonumber
\end{align}
In particular,
\begin{equation}
 \eta\rho_\eta^2\to0\quad\hbox{in }L^{3/2},\;\;\;
 \eta\rho_\eta^\beta\to0
 \quad\hbox{in }L^{(\beta+1)/\beta}.                      \label{eq:art-p-zero}
\end{equation}
Writing $p_0=2\gamma/(\gamma+1)>1$, the kinetic and density energies yield
$m_\eta=\rho_\eta u_\eta$ bounded in
$L^\infty_tL^{p_0}_x$.  The momentum equation bounds
$\partial_tm_\eta$ in $L^1_tW^{-1,1}_x$.  Since
$L^{p_0}(\T)\Subset H^{-1}(\T)$, Simon's theorem gives
\begin{equation}
 m_\eta\to\rho u\quad\hbox{strongly in }L^2_tH^{-1}_x,
 \qquad
 \rho_\eta u_\eta^2\to\rho u^2\quad\hbox{in }\D'.         \label{eq:eta-momentum}
\end{equation}
In addition, \eqref{eq:eta-basic} and
\cref{lem:weighted-poincare} give, after extraction,
\[
 u_\eta\rightharpoonup u
 \quad\text{in }L^2(0,T;H^1(\T)).
\]
By construction,
$(E_{0,\eta},b_{0,\eta})\to(E_0,b_0)$ strongly in $L^2(\T)^2$.  Therefore
\cref{lem:maxwell-closure}, now with $n=\eta$, gives
\begin{equation}
(E_\eta,b_\eta)\to(E,b)
\qquad\text{strongly in }C([0,T];L^2(\T)^2).
\label{eq:eta-Maxwell}
\end{equation}

Moreover,
\[
u_\eta b_\eta\rightharpoonup ub,
\qquad
j_\eta=\sigma(E_\eta+u_\eta b_\eta)
\rightharpoonup j=\sigma(E+ub)
\quad\text{weakly in }L^2_{t,x},
\]
and
\[
j_\eta b_\eta\longrightarrow jb
\qquad\text{in }\D'((0,T)\times\T).
\]

These are the electromagnetic closure inputs needed in addition to the
compressible-fluid compactness argument in the limit $\eta\to0$.
In particular, the physical limit satisfies
\begin{equation}
 \eps E_t-b_x=-j,\qquad b_t-E_x=0,\qquad
 j=\sigma(E+ub)                                           \label{eq:limit-Maxwell}
\end{equation}
 with the first two identities in distributions and Ohm's law in
$L^2_{t,x}$.  Let $w=\partial_x^{-1}(\rho-M)$.  The exact continuity equations
and \cref{lem:primitive} give
\[
 w_\eta\to w\quad\text{in }C([0,T]\times\T).
\]
Applying \cref{lem:moving-lorentz} with $n=\eta$, together with
\eqref{eq:eta-Maxwell} and the weak convergence of $u_\eta$, yields
\begin{equation}
 \int_\T j_\eta b_\eta w_\eta\,dx
 \longrightarrow
 \int_\T jbw\,dx
 \quad\text{in }\mathcal D'(0,T).                          \label{eq:eta-moving-lorentz}
\end{equation}
This is the moving-test convergence required when the primitive identity is
passed to the physical limit.

The last step shows explicitly that the electromagnetic term leaves no
residual defect in the effective-flux argument.

\begin{proposition}[Elimination of the pressure defects]
\label{prop:pressure-defect}
Let $(\rho_\eta,u_\eta,E_\eta,b_\eta,j_\eta)$ be the family above.  Then,
after extraction, for every finite $T$,
\begin{equation}
 \rho_\eta\longrightarrow\rho\quad\hbox{strongly in }
 L^{\gamma+1}((0,T)\times\T),\qquad
 \rho_\eta^\gamma\longrightarrow\rho^\gamma
 \quad\hbox{strongly in }L^{(\gamma+1)/\gamma}.           \label{eq:eta-density}
\end{equation}
Moreover, the concentration measures generated by
$\eta\rho_\eta^3$ and $\eta\rho_\eta^{\beta+1}$ vanish.  Consequently,
\[
 p_\eta(\rho_\eta)\longrightarrow a\rho^\gamma
 \quad\hbox{strongly in }L^1((0,T)\times\T).
\]
\end{proposition}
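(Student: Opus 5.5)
We follow the Lions--Feireisl effective-flux scheme in its 1D density-primitive form, keeping every possible concentration as a defect measure. From \eqref{eq:eta-basic}, extract subsequences with $\rho_\eta\rightharpoonup\rho$ in $L^{\gamma+1}_{t,x}$ and $\rho_\eta^\gamma\rightharpoonup\overline{\rho^\gamma}$ in $L^{(\gamma+1)/\gamma}_{t,x}$. Let $\nu_\gamma$, $\nu_3$, $\nu_\beta$ be the weak-$*$ limits in $\mathcal M([0,T]\times\T)$ of $a\rho_\eta^{\gamma+1}$, $\eta\rho_\eta^3$ and $\eta\rho_\eta^{\beta+1}$. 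By \eqref{eq:art-p-zero}, $\eta\rho_\eta^2$ and $\eta\rho_\eta^\beta$ tend to zero, so the artificial pressures disappear from the momentum equation. Passing to the limit there, using \eqref{eq:eta-momentum} and the Lorentz closure, gives \eqref{eq:weak-mom} with $a\rho^\gamma$ replaced by $a\overline{\rho^\gamma}$.

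\textbf{Step 1: passing the primitive identity to the limit.} Localize \eqref{eq:eta-primitive-id} in time with $\chi\in C_c^\infty(0,T)$, $\chi\ge0$; that is, test the momentum equation by $\chi w_\eta$. Let $\eta\to0$ term by term.
\begin{itemize}
\item The $\chi'\int m_\eta w_\eta$ term converges because $w_\eta\to w$ uniformly and $m_\eta\to m$ in $L^2_tH^{-1}_x$.
\item The convective terms combine into $\chi(M\int\rho_\eta u_\eta^2-|\meanx{m_\eta}|^2)$. They converge by \eqref{eq:eta-momentum}, using the uniform bound $\|w_\eta\|_\infty\le 2M$ and the fact that $\meanx{m_\eta}$ converges strongly in $L^2(0,T)$.
\item The Lorentz term converges by \eqref{eq:eta-moving-lorentz}.
\item The viscous term $\mu\int\chi(u_\eta)_x(\rho_\eta-M)$ is the genuine weak--weak product and is kept as $\mu\chi\,\overline{\rho u_x}$.
\end{itemize}
Repeat the primitive computation on the limit equation, which is justified because $\rho\in L^{\gamma+1}$ and the $\rho^2u^2$ terms cancel algebraically, as in the proof of \cref{prop:eta-gain}, after mollification. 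Subtracting the two identities leaves the effective-flux relation
\[
 \nu_\gamma+\nu_3+\nu_\beta-aM\overline{\rho^\gamma}-\mu\overline{\rho u_x}
 = a\overline{\rho^\gamma}\rho-aM\overline{\rho^\gamma}-\mu\rho u_x
\]
as measures, after localization. I expect localization in $x$ as well as $t$ to be unnecessary in 1D: testing against $\chi(t)\,\partial_x^{-1}(\psi(\rho-M))$-type objects is the usual refinement. Here, though, the global-in-$x$ version combined with an integrated $\rho\log\rho$ identity should suffice.

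\textbf{Step 2: the renormalized $\rho\log\rho$ identity.} Use \eqref{eq:renorm} with $B(z)=z\log z$ for $\rho_\eta$ and for the limit $\rho$; the limit renormalizes by the DiPerna--Lions theory in 1D since $\rho\in L^2$ and $u\in L^2_tH^1_x$. This gives
\[
 \int_\T\bigl(\overline{\rho\log\rho}-\rho\log\rho\bigr)(t)
 =\int_0^t\!\!\int_\T\bigl(\rho u_x-\overline{\rho u_x}\bigr),
\]
where the approximate initial densities converge strongly. Substitute from Step 1: $\mu(\overline{\rho u_x}-\rho u_x)$ equals $(\nu_\gamma-a\overline{\rho^\gamma}\rho)+\nu_3+\nu_\beta$ integrated over $[0,t]\times\T$. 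Each of these pieces is nonnegative. For the first, monotonicity gives $a\rho^{\gamma+1}$-limit $\ge a\overline{\rho^\gamma}\rho$, by the standard $(\rho_\eta^\gamma-\rho^\gamma)(\rho_\eta-\rho)\ge0$ argument carried out at the measure level. The other two are nonnegative measures. Hence the right side above is $\le0$. Convexity makes the left side $\ge0$, so both sides vanish.

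\textbf{Step 3: conclusion.} Vanishing of the left side gives $\overline{\rho\log\rho}=\rho\log\rho$, so $\rho_\eta\to\rho$ a.e. Vanishing of the nonnegative pieces gives $\nu_3=\nu_\beta=0$ and $\nu_\gamma=a\overline{\rho^\gamma}\rho=a\rho^{\gamma+1}$. Convergence of the norms $\|\rho_\eta\|_{L^{\gamma+1}}\to\|\rho\|_{L^{\gamma+1}}$, together with a.e. convergence, yields \eqref{eq:eta-density}. Combining this with \eqref{eq:art-p-zero} gives the strong $L^1$ convergence of $p_\eta(\rho_\eta)$.

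The main obstacle is Step 1. The measures $\nu$ sit only in $\mathcal M$, so the identity must be handled as an equality of measures in time. The limit primitive identity must also be justified at the low regularity $\rho\in L^{\gamma+1}$ for $\gamma$ near $1$, where $\rho^2u^2$ need not be integrable. I would first try to do this by mollifying the limit equation in $x$ and checking that the commutators vanish, relying on the algebraic cancellation before estimates.
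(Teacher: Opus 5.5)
\textbf{Overall.} Your argument follows the paper's route, made explicit. That route is a time-localized, global-in-$x$ density-primitive identity combined with the spatially integrated $\rho\log\rho$ identity; the paper simply cites this step from Lions--Feireisl. On the open interval, Steps 1--2 are sound.

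\textbf{The gap: concentration at the time endpoints.} You take $\nu_\gamma,\nu_3,\nu_\beta$ in $\mathcal M([0,T]\times\T)$, but Step 1 only tests with $\chi\in C_c^\infty(0,T)$. So the effective-flux relation, and hence the vanishing you extract in Step 3, determines these measures only on $(0,T)\times\T$. The sign argument of Step 2 is unaffected, since its left side only sees $(0,t)$. However, ``integrated over $[0,t]\times\T$'' is not what Step 1 gives.

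Weak-$*$ limits of $a\rho_\eta^{\gamma+1}$, $\eta\rho_\eta^3$, $\eta\rho_\eta^{\beta+1}$ may charge $\{t=0\}\times\T$ (an initial layer), and nothing in your proposal excludes this. Write $Q_T=(0,T)\times\T$. Without an endpoint argument, neither $\|\rho_\eta\|_{L^{\gamma+1}(Q_T)}\to\|\rho\|_{L^{\gamma+1}(Q_T)}$ nor $\int_{Q_T}(\eta\rho_\eta^3+\eta\rho_\eta^{\beta+1})\to0$ follows. As written, you obtain only a.e.\ convergence, hence strong convergence in $L^q(Q_T)$ for $q<\gamma+1$ and $L^1$ convergence of the pressure. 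That suffices for the weak formulation but is weaker than the statement. This is why the paper's proof contains a separate ``no concentration at the initial endpoint'' step.

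\textbf{How to close it.} Use the primitive identity on $(0,\tau)$ with its boundary terms, rather than a compactly supported $\chi$. Combine the following:
\begin{itemize}
\item $\|w_\eta\|_\infty\le2M$ and the kinetic-energy bound;
\item the Lorentz bound $C\tau^{1/2}$;
\item the energy bound on $M\int_0^\tau\!\int_\T p_\eta(\rho_\eta)$;
\item Young's inequality for the viscous term, as in \cref{prop:eta-gain}.
\end{itemize}
This gives, for $\tau\le1$,
\[
 \int_0^\tau\!\!\int_\T\Bigl(\frac a2\rho_\eta^{\gamma+1}+\eta\rho_\eta^3+\eta\rho_\eta^{\beta+1}\Bigr)\,dx\,dt
 \le\Bigl|\Bigl[\int_\T m_\eta w_\eta\,dx\Bigr]_0^\tau\Bigr|
 +C\bigl(\tau^{1/2}+\tau^{(\gamma-1)/(2\gamma)}\bigr).
\]
For the boundary term, note that $w_\eta\to w$ in $C([0,T]\times\T)$ and $m_{0,\eta}\to m_0$ in $L^{p_0}$. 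Also $m_\eta\to m$ in $C_w([0,T];L^{p_0}(\T))$, because $\partial_tm_\eta$ is bounded in $L^{(\gamma+1)/\gamma}(0,T;W^{-1,1}(\T))$. Hence the $\limsup_{\eta\to0}$ of the boundary term equals $\bigl|\int_\T m(\tau)w(\tau)\,dx-\int_\T m_0w(0)\,dx\bigr|$, which tends to $0$ as $\tau\to0$. So no mass sits on $\{t=0\}\times\T$. For $t=T$, run the argument on $(0,T+1)$, using that the solutions are global. With this added, Step 3 goes through.

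\textbf{Your flagged obstacle is harmless.} Mollify $w$ in $x$ by an even kernel $\phi_h$. The two convective contributions then combine into
$-\frac12\int\chi\iint\phi_h(x-y)\rho(x)\rho(y)(u(x)-u(y))^2\,dx\,dy\,dt$.
Since $|u(x)-u(y)|^2\le|x-y|\,\|u_x\|_2^2$ and $\|\phi_h*\rho\|_\infty\le Ch^{-1/\gamma}\|\rho\|_\gamma$, this is $O(h^{1-1/\gamma})\|u_x\|_{L^2_{t,x}}^2$. It therefore vanishes as $h\to0$, because $\gamma>1$.
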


\begin{proof}
After replacing the usual inverse-divergence test by the 1D density primitive
introduced above, the pressure identification is the standard
Lions--Feireisl effective-flux and renormalization argument
\cite{Lions1998,FeireislNovotnyPetzeltova2001}.  We therefore record only
the additional electromagnetic term that must be controlled in passing to the
limit.

Let
\[
 w_\eta=\partial_x^{-1}(\rho_\eta-M),\qquad
 w=\partial_x^{-1}(\rho-M).
\]
By \cref{lem:primitive},
$w_\eta\to w$ in $C([0,T]\times\T)$, while the electromagnetic closure gives
\[
 b_\eta\to b
 \quad\hbox{strongly in }C([0,T];L^2(\T)),\qquad
 j_\eta\rightharpoonup j
 \quad\hbox{weakly in }L^2((0,T)\times\T).
\]
Hence
\[
 b_\eta w_\eta\longrightarrow bw
 \quad\hbox{strongly in }L^2((0,T)\times\T),
\]
and therefore
\[
 \int_\T j_\eta b_\eta w_\eta\,dx
 \longrightarrow
 \int_\T jbw\,dx
 \quad\hbox{in }\D'(0,T),
\]
which is precisely \eqref{eq:eta-moving-lorentz}.  Thus the Lorentz term
passes to the limit in the time-localized density-primitive identity.

It also produces no concentration at the initial endpoint.  Indeed, the energy
bounds and $\|w_\eta\|_{L^\infty_{t,x}}\le 2M$ give, uniformly in $\eta$,
\[
 \int_0^\tau\!\int_\T |j_\eta b_\eta w_\eta|\,dx\,dt
 \le
 \|w_\eta\|_{L^\infty_{t,x}}
 \|j_\eta\|_{L^2((0,\tau)\times\T)}
 \|b_\eta\|_{L^2((0,\tau)\times\T)}
 \le C\tau^{1/2}.
\]
Consequently the electromagnetic contribution creates no additional defect
measure in the effective-flux identity, including at $t=0$.

All remaining terms are handled by the classical monotonicity,
effective-viscous-flux, and renormalized-continuity argument
\cite{Lions1998,FeireislNovotnyPetzeltova2001}.
In the present setting, applying that argument to the time-localized
density-primitive identity, together with the renormalized
$\rho\log\rho$ identity, gives the following defect inequality.
Writing
\[
 Q_T=(0,T)\times\T,
\]
one obtains
\[
 \limsup_{\eta\to0}
 \int_{Q_T}
 \left(
   a\rho_\eta^{\gamma+1}
   +\eta\rho_\eta^3
   +\eta\rho_\eta^{\beta+1}
 \right)\,dx\,dt
 \le
 a\int_{Q_T}\rho^{\gamma+1}\,dx\,dt.
\]
The only additional term relative to the classical compressible-fluid
argument is the Lorentz contribution.  Its passage to the limit is
provided by \eqref{eq:eta-moving-lorentz}, while the estimate above shows
that it creates no concentration at the initial endpoint.

On the other hand, \eqref{eq:eta-gain} gives boundedness of
$\rho_\eta$ in $L^{\gamma+1}(Q_T)$, and the weak limit in this space is
$\rho$.  Hence weak lower semicontinuity yields
\[
 a\int_{Q_T}\rho^{\gamma+1}\,dx\,dt
 \le
 a\liminf_{\eta\to0}
 \int_{Q_T}\rho_\eta^{\gamma+1}\,dx\,dt.
\]
Combining the last two inequalities and using the nonnegativity of the
artificial-pressure terms gives
\[
 \int_{Q_T}\rho_\eta^{\gamma+1}\,dx\,dt
 \longrightarrow
 \int_{Q_T}\rho^{\gamma+1}\,dx\,dt,
\]
and
\[
 \int_{Q_T}\eta\rho_\eta^3\,dx\,dt
 +
 \int_{Q_T}\eta\rho_\eta^{\beta+1}\,dx\,dt
 \longrightarrow0.
\]
Thus the nonnegative concentration measures generated by
$\eta\rho_\eta^3$ and $\eta\rho_\eta^{\beta+1}$ vanish.
Since $L^{\gamma+1}(Q_T)$ is uniformly convex, weak convergence together
with convergence of the norms gives
\[
 \rho_\eta\longrightarrow\rho
 \quad\text{strongly in }L^{\gamma+1}(Q_T).
\]
Consequently,
\[
 \rho_\eta^\gamma\longrightarrow\rho^\gamma
 \quad\text{strongly in }L^{(\gamma+1)/\gamma}(Q_T).
\]
Finally, combining this with \eqref{eq:art-p-zero} yields
\[
 p_\eta(\rho_\eta)\longrightarrow a\rho^\gamma
 \quad\text{strongly in }L^1(Q_T),
\]
and proves \eqref{eq:eta-density}.

\end{proof}

\subsection{Initial data and completion of the proof}
\label{subsec:initial-data}

The fluid initial data are approximated by the standard finite-energy
construction for compressible Navier--Stokes equations
\cite{Lions1998,FeireislNovotnyPetzeltova2001}, with smooth positive
densities of mass $M$ such that
$\rho_{0,\eta}\to\rho_0$ in $L^\gamma(\T)$, momenta
$m_{0,\eta}\to m_0$ in $L^{p_0}(\T)$, and convergent augmented initial
energy.

For the electromagnetic variables, choose smooth periodic data
\[
 (E_{0,\eta},b_{0,\eta})\longrightarrow(E_0,b_0)
 \qquad\text{strongly in }L^2(\T)^2,
\]
and use the same pair at every $N$- and $\delta$-level for fixed $\eta$.
Consequently, the strong convergences in
\eqref{eq:N-Maxwell}, \eqref{eq:delta-Maxwell}, and
\eqref{eq:eta-Maxwell} hold uniformly down to $t=0$, so the limiting
electromagnetic fields attain the prescribed initial data $(E_0,b_0)$
strongly in $L^2(\T)^2$.

Convex lower semicontinuity in $(\rho,m)$, strong electromagnetic
convergence, and weak lower semicontinuity of $u_x$ and $j$ pass the
approximate energy identity to \eqref{eq:energy-ineq}.  The weak formulations
follow from the convergences established above, while the renormalized
continuity equation follows from the standard commutator argument
\cite{DiPernaLions1989}.  Since the approximate solutions are global and the preceding estimates hold on
every finite time interval, the compactness argument above yields a global
finite-energy weak solution and completes the proof of \cref{thm:main}.

\section*{Acknowledgments}

The author is partially supported by NSF grant DMS-2510425 and by the Simons
Foundation grant MPS-TSM-00007824.

\section*{AI Use Disclosure}

During the preparation of this work, the author used ChatGPT (OpenAI) and Claude (Anthropic) to assist with language editing, checking notation and internal consistency, and LaTeX preparation. The author reviewed and edited any AI-generated output and takes full responsibility for the content of the article.

\end{document}